\documentclass[12pt]{amsart}

\usepackage{amsmath,amssymb,amsthm}
\usepackage{mathtools}
\usepackage{enumerate}
\usepackage{tikz}
\usepackage[dvipsnames]{xcolor}

\usepackage[margin=1.2in]{geometry}
\usepackage{hyperref}
\hypersetup{
  colorlinks=true,
  linkcolor=NavyBlue,
  citecolor=OliveGreen,
  urlcolor=MidnightBlue
}

\theoremstyle{plain}
\newtheorem{theorem}{Theorem}[section]
\newtheorem{lemma}[theorem]{Lemma}

\theoremstyle{definition}
\newtheorem{definition}[theorem]{Definition}
\newtheorem{example}[theorem]{Example}

\theoremstyle{remark}
\newtheorem{remark}[theorem]{Remark}

\newcommand{\LatAut}{\operatorname{LatAut}}
\newcommand{\sgn}{\operatorname{sgn}}
\newcommand{\Aut}{\operatorname{Aut}}

\newcommand{\id}{\operatorname{id}}
\newcommand{\GL}{\operatorname{GL}}

\begin{document}

\title[Termination of the LatAut Tower of Symmetric Groups]{Termination of the Lattice-Automorphism Tower\\
for Direct Products of Symmetric Groups}

\author[]{Sonukumar}
\address{Department of Mathematics, Manipal Institute of Technology}
\email{sonupnkumar@gmail.com}

\author[]{Vinay Madhusudanan}
\address{Department of Mathematics, Manipal Institute of Technology}
\email{vinay.m2000@gmail.com}

\date{\today}

\subjclass[2020]{20E15, 20F28, 06B05, 20B30}

\keywords{lattice automorphism group, normal subgroup lattice, LatAut tower,
symmetric groups, Goursat's lemma, admissible triples, direct products,
termination theorem}

\begin{abstract}
Let $G$ be a finite group. Let $\mathcal{N}(G)$ be the lattice of normal
subgroups ordered by inclusion, regarded as an abstract lattice.  Define 
$\LatAut(G) := \Aut(\mathcal{N}(G))$. The \emph{LatAut tower} is the
sequence defined by $G_0 = G$, $G_{n+1} = \LatAut(G_n)$.

Let $G$ be  a \emph{tower group} if $G \cong \prod_{k \geq 3} S_k^{a_k}$
with finitely many $a_k \neq 0$. We establish the following for tower groups.

\smallskip
\noindent\emph{Product Formula.}\;
$\LatAut\!\bigl(\prod_{k \geq 3} S_k^{a_k}\bigr) \cong S_{a_4} \times S_B$,
where $B = \sum_{k \geq 3,\, k \neq 4} a_k$.

\smallskip
\noindent\emph{Termination Theorem.}\; For every finite group $G$, 
$G_3 = 1$ for every tower group $G_0$, and we also prove that this bound is sharp with an example. 

\smallskip
The proof applies Goursat's lemma to classify $\mathcal{N}(G)$ into three
families parameterised by admissible triples $(J,\mathbf{P},H)$ as 
sub-products, sign-parity elements, and mixed elements and uses the
Krull--Schmidt theorem to identify the direct factors $S_k^{(k,i)}$ as precisely the nontrivial
indecomposable complemented elements of $\mathcal{N}(G)$
(the complemented elements being exactly the full sub-products).
These results do not extend to groups outside the tower-group family. 
\end{abstract}

\maketitle

\section{Introduction}

The \emph{automorphism tower} of a finite group $G$ is obtained by iterating
the ordinary automorphism group:
\[
  G \to \Aut(G) \to \Aut(\Aut(G)) \to \cdots .
\]
Wielandt~\cite{Wielandt1939} proved that this
tower terminates for every finite centerless group, and Thomas~\cite{Thomas1985}
extended the analysis to the transfinitely iterated case. The analogous
question for lattice automorphisms is structurally different. $\LatAut(G)$ is
determined solely by the order-type of $\mathcal{N}(G)$, discarding all
group-theoretic information beyond normal subgroup inclusion. For a rich family
of groups this compression is dramatic, and termination can be established
concretely with a sharp step count.

We work throughout with \emph{tower groups} which are finite direct products of symmetric groups
$G = \prod_{k \geq 3} S_k^{a_k}$, where $S_k$ denotes the symmetric group
on $k$ letters and finitely many exponents $a_k \geq 0$ are nonzero. Our
main results are:

\begin{theorem}[Product Formula]\label{thm:product}
Let $G = \prod_{k \geq 3} S_k^{a_k}$ be a tower group. Then
\[
  \LatAut(G) \;\cong\; S_{a_4} \times S_B,
  \qquad B = \sum_{\substack{k \geq 3 \\ k \neq 4}} a_k.
\]
\end{theorem}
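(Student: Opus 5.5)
The plan is to combine the Goursat-type description of $\mathcal{N}(G)$ with a rigidity argument: a lattice automorphism is determined by how it permutes the direct factors, and the admissible permutations are exactly those preserving the type of each factor's normal-subgroup chain. I have not checked every step; the injectivity step in particular is the one where the stated answer could fail.

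\textbf{Step 1 (local structure).} For $k=3$ and $k\ge 5$, $\mathcal{N}(S_k)$ is the $3$-chain $1<A_k<S_k$, with $S_k/A_k\cong C_2$. For $k=4$ it is the $4$-chain $1<V_4<A_4<S_4$, with cyclic quotients $V_4$ (Klein four), $C_3$, $C_2$. Index the factors of $G$ as $S_k^{(k,i)}$.

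\textbf{Step 2 (Goursat description).} Apply Goursat's lemma inductively to describe every $N\trianglelefteq G$ by a triple $(J,\mathbf P,H)$:
\begin{itemize}
\item $J$ is the set of factors that $N$ contains fully;
\item $\mathbf P$ records the local levels of $N\cap S_k^{(k,i)}$ and of the projection of $N$;
\item $H$ is a subgroup of the product of the abelian sections $S_k/A_k$ (and $A_4/V_4$), invariant under the conjugation action.
\end{itemize}
The key facts are that $G$ is centerless and that each nonabelian simple section ($A_k$, $k\ge5$) is perfect. For $S_3$, conjugation inverts $A_3$ coordinatewise, so invariant subgroups of $A_3^{a_3}$ are sub-products, exactly as for $A_k^{a_k}$ with $k\ge5$. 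This is why $S_3$ behaves like $S_k$ for $k\ge5$ but not like $S_4$.

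\textbf{Step 3 (complemented elements).} Show that the complemented elements of $\mathcal N(G)$ are exactly the full sub-products $\prod_{(k,i)\in J}S_k^{(k,i)}$. Then use Krull--Schmidt to show that the nontrivial indecomposable complemented elements are exactly the factors $S_k^{(k,i)}$. Every $\phi\in\LatAut(G)$ therefore permutes these factors and induces an isomorphism of intervals $[1,S_k^{(k,i)}]\to[1,\phi(S_k^{(k,i)})]$. Since these intervals are chains of length $3$ or $4$, $\phi$ sends $S_4$-factors to $S_4$-factors and the others among themselves. This gives a homomorphism $\Phi:\LatAut(G)\to S_{a_4}\times S_B$.

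\textbf{Step 4 (surjectivity).} A permutation of isomorphic factors is induced by a group automorphism, so it gives a lattice automorphism. For transpositions exchanging an $S_3$-factor with an $S_k$-factor ($k\ge 5$), I would use Step 2 directly. The admissible triples involving these two factors have identical combinatorics: the same local chain, the same $C_2$ sign quotient, and sub-product-only behaviour on the middle level. Hence swapping the labels is a well-defined bijection on triples that preserves inclusion, and $\Phi$ is surjective.

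\textbf{Step 5 (injectivity, the main obstacle).} One must show that an automorphism fixing every factor is the identity. The danger is the sign-parity part. The interval above $\prod A_k$ is the subspace lattice of $\mathbb F_2^{B+a_4}$, which by itself has the much larger automorphism group $\mathrm{P\Gamma L}$, so rigidity must come from how this interval is embedded in the whole lattice. I would attack it as follows.
\begin{itemize}
\item If $\phi$ fixes all factors, it fixes all sub-products, all meets $N\wedge S_k^{(k,i)}$, and all joins with sub-products.
\item For a parity element $N$ with subspace $H$, the joins $N\vee\prod_{(k,i)\notin T}S_k^{(k,i)}$ over subsets $T$ determine the projections of $H$ onto every coordinate subset $\mathbb F_2^T$.
\item Taking $|T|\le 2$, and iterating over $T$, should pin down which parity vectors lie in $H$. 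One can also use the mixed elements, where $N\cap A$-part is a proper sub-product, to detect support vectors.
\end{itemize}
If pure projection data turns out to be insufficient, I would fall back on an induction on $B+a_4$. This would restrict $\phi$ to the interval below a maximal sub-product, apply the inductive hypothesis there, and then show that an element is determined by its meets with the maximal sub-products together with its join with the complementary factor. This determination claim is where I expect the real difficulty, and where the whole argument could fail.
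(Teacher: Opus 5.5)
Steps 1 through 4 follow essentially the paper's route: the Goursat classification, complemented elements via Krull--Schmidt/Remak, the chain length of $[1,S_k^{(k,i)}]$ to separate $\mathcal{A}$ from $\mathcal{B}$, and relabelling of triples for surjectivity. Step~5 is a genuine gap. You never prove that an automorphism fixing every factor is the identity. Without that, Steps 3--4 give only a surjection $\LatAut(G)\to S_{a_4}\times S_B$, not an isomorphism.

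Your concrete suggestion cannot work. A join $N\vee\prod_{(k,i)\notin T}S_k^{(k,i)}$ records only the projection of $N$ onto the coordinates in $T$. Such projections forget parity relations of larger support. Suppose $G$ has four factors $c_1,\dots,c_4$. The distinct coatoms $D_{\{c_1,c_2,c_3\}}$ and $D_{\{c_1,c_2,c_3,c_4\}}$ both project onto the full product for every $T$ with $|T|\le 2$, so every join you propose equals $G$ for both. Hence no iteration over $|T|\le 2$ separates them. Your inductive fallback may be completable, but it rests on a determination claim that you leave unproved and flag as possibly false. As written it is not an argument.

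The missing idea is to use \emph{meets} with the fixed elements, together with the fact that coatoms and sub-products generate everything under meets.
\begin{enumerate}
\item Once $\varphi$ fixes each chain $[1,S_k^{(k,i)}]$ pointwise, it fixes all sub-products.
\item The coatoms of $\mathcal{N}(G)$ are exactly the index-$2$ subgroups, namely the sub-product coatoms and the $D_I$ with $|I|\ge 2$. So $\varphi$ permutes the $D_I$.
\item $D_I\wedge S_k^{(k,i)}$ equals $A_k^{(k,i)}$ when $(k,i)\in I$ and equals $S_k^{(k,i)}$ otherwise. Since $\varphi$ fixes $S_k^{(k,i)}$ and $A_k^{(k,i)}$, the set $I$ is preserved, so $\varphi(D_I)=D_I$.
\item Every $N(J,\mathbf P,H)$ equals $S_{\mathbf P}\wedge D_{I_1}\wedge\cdots\wedge D_{I_\ell}$, where the $I_j$ are the supports of a basis of $H^\perp$. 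Admissibility makes each $|I_j|\ge 2$. Hence $\varphi=\id$.
\end{enumerate}
This is the paper's argument.

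The same idea answers your $\mathrm{P\Gamma L}$ worry directly. Put $A_\ast:=\prod_{(k,i)}A_k^{(k,i)}$. Then $\varphi$ fixes $A_\ast$ and the coordinate points $S_k^{(k,i)}\vee A_\ast$ of the interval $[A_\ast,G]$, which is the subspace lattice of $\mathbb{F}_2^{a_4+B}$. Over $\mathbb{F}_2$ every line has exactly three points. So if the points $\langle e_c\rangle$ and $\langle e_S\rangle$ are fixed (with $e_S=\sum_{s\in S}e_s$ and $c\notin S$), the third point $\langle e_{S\cup\{c\}}\rangle$ on the line they span is also fixed. By induction every point is fixed, so the whole interval is fixed. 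Finally, every $N$ is the meet $S_{\mathbf P}\wedge(N\vee A_\ast)$ of a sub-product with an element of this interval, so $\varphi=\id$.
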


\begin{theorem}[Termination]\label{thm:termination}
For any tower group $G_0$, the LatAut tower satisfies $G_3 = 1$.
This bound is sharp: $G_0 = S_4^2 \times S_3^2$ gives $G_3 = 1$ with $G_2 \neq 1$.
\end{theorem}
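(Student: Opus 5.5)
The plan is to apply the Product Formula (Theorem~\ref{thm:product}) once and then handle by hand the few small groups that can arise. Its output need not be a tower group, because $S_0$, $S_1$ are trivial and $S_2\cong C_2$ lies outside the family. Write $G_0=\prod_{k\ge 3}S_k^{a_k}$. Then
\[
G_1\cong S_{a_4}\times S_B, \qquad B=\sum_{k\neq 4}a_k .
\]
Discarding trivial factors, $G_1$ is a direct product of at most two symmetric groups $S_m$ with $m\ge 2$. The case split is according to how many factors equal $C_2=S_2$.

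\emph{Case 1: no factor equals $C_2$.} Then $G_1$ is a tower group with at most two factors, so Theorem~\ref{thm:product} applies again and gives $G_2\cong S_c\times S_d$.
\begin{itemize}
\item[(i)] Here $c$ counts the factors isomorphic to $S_4$ and $d$ the remaining factors, so $c+d\le 2$.
\item[(ii)] Hence $G_2$ is $1$ or $C_2$.
\item[(iii)] Since $\mathcal N(C_2)$ is a two-element chain, $\LatAut(C_2)=1$, so $G_3=1$ in either case.
\end{itemize}

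\emph{Case 2: exactly one factor equals $C_2$.} Then $G_1$ is $C_2$ or $C_2\times S_m$ with $m\ge 3$.
\begin{itemize}
\item[(i)] If $G_1=C_2$, then $G_2=1$ as above.
\item[(ii)] For $C_2\times S_m$, Goursat's lemma shows the normal subgroups are the products $N_1\times N_2$ together with one diagonal subgroup $\{(\sgn\sigma,\sigma)\}$, coming from the common quotient $C_2$.
\item[(iii)] I will show this lattice is rigid by comparing covers. The atom $C_2$ has a single upper cover, $C_2\times A_m$ (or $C_2\times V_4$ when $m=4$). The other atom, $A_m$ (or $V_4$), has several upper covers. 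So no automorphism swaps the atoms.
\item[(iv)] Propagating upward, every element is then fixed, so $G_2=1$.
\end{itemize}
This cover-counting check, done separately for $m=3$, $m=4$ and $m\ge 5$, is the step I expect to need the most care. The case $m=4$ is the delicate one because of the extra layer $V_4$.

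\emph{Case 3: both factors equal $C_2$.} Then $G_1=C_2^2$ and $\mathcal N(G_1)$ is the diamond $M_3$, so $G_2\cong S_3$. Since $\mathcal N(S_3)$ is the chain $1<A_3<S_3$, we get $G_3=1$.

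\emph{Sharpness.} For $G_0=S_4^2\times S_3^2$ we have $a_4=2$ and $B=2$. So $G_1=C_2^2$ and $G_2=S_3\ne 1$, while $G_3=1$; this is exactly Case 3.
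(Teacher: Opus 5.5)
Your route is the paper's: one application of Theorem~\ref{thm:product}, then a case analysis of $S_{a_4}\times S_B$. Your Cases~1 and~3 and the sharpness argument are correct.

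The problem is Case~2(iv). The lattice $\mathcal{N}(C_2\times S_m)$ is \emph{not} rigid, and in fact $G_2\cong C_2$, not $1$. Your cover count does fix the atoms $C_2\times\{1\}$ and $\{1\}\times A_m$ (resp.\ $\{1\}\times V_4$). Hence it also fixes $C_2\times A_m$, and for $m=4$ also $C_2\times V_4$ and $\{1\}\times A_4$. But the upward propagation stalls at the top. The two coatoms $\{1\}\times S_m$ and $D=\{(\sgn\sigma,\sigma)\}$ both cover $\{1\}\times A_m$. They have exactly the same elements strictly below them: $\{1\}$ and $\{1\}\times A_m$, plus $\{1\}\times V_4$ when $m=4$ (note $C_2\times\{1\}\not\le D$). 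Both have only $G$ above them. So exchanging them and fixing everything else is a nontrivial lattice automorphism.

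It is even induced by a group automorphism. The map $(\epsilon,\sigma)\mapsto(\epsilon\,\sgn\sigma,\sigma)$ is a central automorphism of $C_2\times S_m$. It carries $\{1\}\times S_m$ onto $D$ and fixes every other normal subgroup. This reflects the second direct decomposition $C_2\times S_m=(C_2\times\{1\})\times D$, which no cover-counting can distinguish from the first. This is exactly the content of Lemma~\ref{lem:c2sm}.

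The theorem itself survives with a one-line repair. Your argument shows every lattice automorphism fixes everything except possibly for swapping $\{1\}\times S_m$ and $D$, so $|G_2|\le 2$. Then $G_3=1$ because $\LatAut(C_2)=\LatAut(1)=1$, the two-element-chain remark you already use in Case~1(iii). This is how the paper treats its Case~5 (Lemmas~\ref{lem:c2sm} and~\ref{lem:c2trivial}).

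The correction also matters for the sharpness picture. Since $G_2\cong C_2\neq 1$ here, groups with one of $a_4,B$ equal to $2$ and the other at least $3$ also need all three steps, e.g.\ $S_4^2\times S_3^3$. Your version would have them reach $1$ already at $G_2$.
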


The proof of termination has four steps. In Section~\ref{sec:lattice} we give a complete
description of $\mathcal{N}(G)$ via a bijection with \emph{admissible triples}
$(J,\mathbf{P},H)$, where $J \subseteq \{(k,i) : k \geq 3,\; a_k > 0,\; 1 \leq i \leq a_k\}$
records which individual copy-slots participate in sign couplings,
$\mathbf{P} = (P_{k,i})_{(k,i) \notin J}$ records the local normal
subgroup data for uncoupled factors, and $H \leq \{\pm 1\}^J$ records
the joint sign constraint.
This parameterization encompasses three families:
\emph{sub-product elements} $\prod_{k,i} P_{k,i}^{(k,i)}$ (corresponding to
$J = \emptyset$), \emph{sign-parity elements} $D_I$ (corresponding to $J = I$
with $P_{k,i} = S_k$ for $(k,i) \notin I$ and $H = H_I$ the product-one kernel),
and \emph{mixed elements}. All remaining admissible triples, characterised by
$J \neq \emptyset$ and either $H \neq H_J$ or $P_{k,i} \neq S_k$ for some
$(k,i) \notin J$.
In Section~\ref{sec:injection} we show that every lattice automorphism
permutes the set of direct factors $\{S_k^{(k,i)}\}$. The complemented
elements of $\mathcal{N}(G)$ are exactly the full sub-products
$\prod_{(k,i) \in T} S_k^{(k,i)}$; among these the nontrivial
indecomposable ones are precisely the individual factors $S_k^{(k,i)}$,
which are therefore preserved as a set by every lattice automorphism.
This gives an injection $\LatAut(G) \hookrightarrow S_{a_4} \times S_B$.
Section~\ref{sec:realisation} proves surjectivity by constructing, for each
$\sigma \in \operatorname{Sym}(\mathcal{A}) \times \operatorname{Sym}(\mathcal{B})$,
an explicit lattice automorphism $\tau_\sigma \in \LatAut(G)$ defined directly
on admissible triples via the unique chain isomorphism between equal-length
chains. Order preservation is verified using the Inclusion Characterization
(Lemma~\ref{lem:incl-char}), requiring no separate case analysis for
cross-degree transpositions.
Section~\ref{sec:termination} uses the Product Formula
to carry out the termination analysis by complete case enumeration.
Section~\ref{sec:examples} works through explicit examples.

\section{Notation and Preliminaries}

Throughout the paper, all groups are finite. We write $\mathcal{N}(G)$ for the lattice
of normal subgroups of $G$, with meet $H \wedge K = H \cap K$ and
join $H \vee K = HK$. This lattice is modular by a theorem of
Ore~\cite{Ore1938}; see also Birkhoff~\cite{Birkhoff1967}.

\begin{definition}
The \emph{lattice-automorphism group} of $G$ is
\[
  \LatAut(G) \;=\; \Aut\!\bigl(\mathcal{N}(G)\bigr),
\]
the group of all automorphisms of $\mathcal{N}(G)$ regarded as an abstract
lattice. Explicitly, $\LatAut(G)$ consists of all bijections
$\varphi \colon \mathcal{N}(G) \to \mathcal{N}(G)$ satisfying
$H \leq K \Leftrightarrow \varphi(H) \leq \varphi(K)$ for all $H, K \in \mathcal{N}(G)$.

The \emph{LatAut tower} of $G$ is the sequence of finite groups
\[
  G_0 = G, \qquad G_{n+1} = \Aut\!\bigl(\mathcal{N}(G_n)\bigr) \quad (n \geq 0),
\]
where at each step one forms the normal subgroup lattice $\mathcal{N}(G_n)$
of the group $G_n$ and takes its automorphism group. The notation $\LatAut(G_n)$
is used interchangeably with $\Aut(\mathcal{N}(G_n))$ throughout.
\end{definition}

\begin{definition}
A finite group $G$ is a \emph{tower group} if
$G \cong \prod_{k \geq 3} S_k^{a_k}$ where finitely many $a_k \geq 0$ are
nonzero. We write the individual factors as $S_k^{(k,i)}$ for each $k$ with
$a_k > 0$ and $1 \leq i \leq a_k$, and $\pi_{k,i} \colon G \to S_k$ for the
projection onto the $i$-th copy of $S_k$.
\end{definition}

The following two tools are used in the proofs.

\begin{theorem}[Goursat's Lemma, {\cite[Theorem~1.6.1]{Schmidt1994}}]
\label{thm:goursat}
Let $G = H \times K$. The subgroups $N \leq G$ are in bijection with quintuples
$(H_0, H_1, K_0, K_1, \varphi)$ where $H_1 \trianglelefteq H_0 \leq H$,
$K_1 \trianglelefteq K_0 \leq K$, and $\varphi \colon H_0/H_1 \xrightarrow{\;\sim\;} K_0/K_1$.
The subgroup corresponding to this data is
\[
  N = \bigl\{(a,b) \in H_0 \times K_0 : \varphi(aH_1) = bK_1\bigr\}.
\]
The subgroup $N$ is normal in $G$ if and only if
\[
  [H,\,H_0] \leq H_1 \qquad\text{and}\qquad [K,\,K_0] \leq K_1.
\]
Equivalently, $H$ acts trivially by conjugation on $H_0/H_1$, and
$K$ acts trivially by conjugation on $K_0/K_1$.
In particular,
\[
H_0,H_1 \trianglelefteq H
\qquad\text{and}\qquad
K_0,K_1 \trianglelefteq K.
\]\end{theorem}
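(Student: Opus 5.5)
The plan is to prove the bijection by passing directly between subgroups $N \le H \times K$ and quintuples, and then to read off the normality criterion by conjugating separately by elements $(h,1)$ and $(1,k)$.

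\emph{From $N$ to a quintuple.} Given $N \le H \times K$, set
\[
H_0 = \pi_H(N), \qquad H_1 = \{a \in H : (a,1) \in N\},
\]
and define $K_0, K_1$ symmetrically. Then $H_1 = \pi_H(N \cap (H \times 1))$. Since $N \cap (H\times 1)$ is normal in $N$, its image $H_1$ is normal in $\pi_H(N) = H_0$; the same holds for $K_1 \trianglelefteq K_0$. Define $\varphi(aH_1) = bK_1$ whenever $(a,b) \in N$.

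Well-definedness is the key point. If $(a,b),(a',b') \in N$ with $aH_1 = a'H_1$, then $(a^{-1}a', b^{-1}b') \in N$. Multiplying by $(a'^{-1}a,1) \in N$ shows $(1, b^{-1}b') \in N$, so $bK_1 = b'K_1$. The map $\varphi$ is clearly a surjective homomorphism onto $K_0/K_1$. The symmetric argument shows it is injective, so $\varphi$ is an isomorphism. Finally, one checks that
\[
N = \{(a,b) \in H_0 \times K_0 : \varphi(aH_1) = bK_1\}.
\]
The inclusion $\subseteq$ is by definition. For $\supseteq$: given such $(a,b)$, pick $(a,b'') \in N$; then $b^{-1}b'' \in K_1$, so $(a,b) = (a,b'')(1,b''^{-1}b)$ lies in $N$.

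\emph{From a quintuple to $N$.} Conversely, for any quintuple the displayed set is a subgroup, because $\varphi$ is a homomorphism. Its projection is $H_0$ since $\varphi$ is surjective, and its slice $\{a : (a,1) \in N\}$ is $H_1$ since $\varphi$ is injective. The same holds on the $K$ side, and the induced isomorphism is $\varphi$ itself. Hence the two constructions are mutually inverse, which gives the bijection.

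\emph{Normality.} Since $H\times K$ is generated by $H \times 1$ and $1 \times K$, it suffices to test conjugation by $(h,1)$ and by $(1,k)$. For $(a,b) \in N$ we have $(h,1)(a,b)(h,1)^{-1} = (hah^{-1}, b)$. This lies in $N$ precisely when $hah^{-1} \in H_0$ and
\[
\varphi(hah^{-1}H_1) = \varphi(aH_1),
\]
that is, $[h,a] \in H_1$ (using $H_1 \le H_0$).

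For necessity: every $a \in H_0$ occurs in some $(a,b) \in N$, so normality of $N$ forces $[H,H_0] \le H_1$, and symmetrically $[K,K_0] \le K_1$. For sufficiency: $[H,H_0] \le H_1$ gives $hah^{-1} = [h,a]\,a \in H_0$, so conjugation preserves $N$. The two formulations of the condition are equivalent, since "$H$ acts trivially on $H_0/H_1$" means exactly that $hah^{-1}a^{-1} \in H_1$ for all $h \in H$ and $a \in H_0$.

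\emph{The closing ``in particular'' clause.} This is where I expect the main obstacle: one must notice that normality of $H_1$ in all of $H$, not just in $H_0$, already follows from the commutator condition. For $a \in H_1 \subseteq H_0$ we have $hah^{-1} = [h,a]\,a$ with $[h,a] \in H_1$, so $hah^{-1} \in H_1$. Normality of $H_0$ in $H$ follows in the same way, since $[H,H_0] \le H_1 \le H_0$. The remaining steps are routine verifications.
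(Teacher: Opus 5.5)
The paper does not prove this statement. It is quoted from Schmidt (Theorem~1.6.1) and used only as a black box in Lemma~\ref{lem:lattice}, so there is no internal argument to compare against. Your proof is the standard one and it is correct, including the normality criterion and the ``in particular'' clause.

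Two small attributions should be tightened.
\begin{enumerate}[(1)]
\item The projection of the displayed set onto $H$ is all of $H_0$ simply because $\varphi$ is defined on every coset $aH_1$. Surjectivity of $\varphi$ is what gives projection $K_0$ onto $K$.
\item Turning $\varphi(hah^{-1}H_1)=\varphi(aH_1)$ into $[h,a]\in H_1$ uses injectivity of $\varphi$ together with $H_1\trianglelefteq H_0$, not merely $H_1\le H_0$. Normality is needed so that $[h,a]\,aH_1=aH_1$ holds if and only if $[h,a]\in H_1$.
\end{enumerate}

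Also, the closing clause is less of an obstacle than you expect. Once $N\trianglelefteq G$, $H_0=\pi_H(N)$ is the image of a normal subgroup under a surjection. Likewise $H_1\times 1=N\cap(H\times 1)$ is an intersection of normal subgroups. So both normalities can be read off directly, without going through the commutator condition.
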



\begin{theorem}[Krull--Remak--Schmidt, {\cite[Theorem~3.3.8]{Robinson1996}}]
\label{thm:ks}
Let $G = H_1 \times \cdots \times H_r = K_1 \times \cdots \times K_s$
be two decompositions of $G$ into indecomposable factors. Then $r = s$
and there is a central automorphism $\alpha$ of $G$ such that, after
suitable relabelling, $H_i^\alpha = K_i$ for $i = 1, \ldots, r$.
\end{theorem}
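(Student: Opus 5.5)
The plan is the classical Fitting-lemma argument, by induction on $r$ (or on $|G|$). \textbf{Step 1 (Fitting's lemma):} for an indecomposable finite group $H$ and a \emph{normal} endomorphism $\theta$ of $H$ (one commuting with all inner automorphisms), $\theta$ is either an automorphism or nilpotent. I would prove this by noting that the chains $\theta(H) \supseteq \theta^2(H) \supseteq \cdots$ and $\ker\theta \subseteq \ker\theta^2 \subseteq \cdots$ stabilise at some $n$ because $H$ is finite. Normality of $\theta$ makes $\theta^n(H)$ and $\ker\theta^n$ normal, and then $H = \theta^n(H) \times \ker\theta^n$. Indecomposability forces one factor to be trivial. \textbf{Step 2 (sums):} if $\theta_1,\theta_2$ are normal endomorphisms of an indecomposable $H$ whose sum $\theta_1+\theta_2$ (pointwise product) is again an endomorphism, and both are nilpotent, then the sum is nilpotent. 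I would show this by assuming $\theta_1+\theta_2=\alpha$ is an automorphism, replacing $\theta_i$ by $\theta_i\alpha^{-1}$, and deriving a contradiction from $\theta_1 = 1-\theta_2$ being invertible when $\theta_2$ is nilpotent. By induction the statement extends to finite sums.

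\textbf{Step 3 (exchange):} write $\pi_j$ for the projections onto the $H_j$ and $\rho_l$ for the projections onto the $K_l$. Restricted to $H_1$, the maps $\theta_l = \pi_1\rho_l|_{H_1}$ are normal endomorphisms of $H_1$ (projections of direct decompositions are normal endomorphisms). Their sum is the identity on $H_1$, and partial sums are endomorphisms because the images of distinct $\rho_l$ commute elementwise. By Step~2 some $\theta_l$, say $l=1$ after relabelling, is an automorphism of $H_1$. Then $\rho_1|_{H_1}\colon H_1\to K_1$ is injective. I would then show that $\pi_1|_{K_1}$ is also injective, using indecomposability of $K_1$ and Fitting applied to $\rho_1\pi_1|_{K_1}$, so $H_1\cong K_1$.

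Next I would define $\alpha$ on $G$ by $g \mapsto g\cdot \rho_1(\pi_1(g))\,\pi_1(g)^{-1}$, or equivalently by letting $\alpha$ act as $\rho_1$ on $H_1$ and as the identity on $H_2\times\cdots\times H_r$. I would check that $\alpha$ is a central automorphism, meaning $g^{-1}g^\alpha \in Z(G)$. The point is that $h^{-1}\rho_1(h)$ equals the product of the $\rho_l(h)^{-1}$ over $l \ge 2$, and that element centralises everything: it commutes with $H_1$ via normality and with the other $H_j$ trivially. This gives $G = K_1\times H_2\times\cdots\times H_r$, with $H_1^\alpha = K_1$.

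\textbf{Step 4 (induction):} passing to $G/K_1 \cong H_2\times\cdots\times H_r \cong K_2\times\cdots\times K_s$, or working inside the complement of $K_1$, induction gives $r=s$ and a central automorphism carrying the remaining $H_i$ to the $K_i$. Composing these central automorphisms, extended trivially on the complementary factor, yields $\alpha$. The main obstacle I expect is Step~3: verifying that the constructed map is genuinely a central automorphism and that the exchange gives an internal direct decomposition. The non-abelian setting makes "sums" of endomorphisms delicate, and care is needed that partial sums are homomorphisms, which follows from the commuting images of distinct projections. I would first try to handle this by working entirely with normal endomorphisms, where the needed commutativity is automatic.
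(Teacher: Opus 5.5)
Your outline is the standard Fitting-lemma argument. The paper does not prove this statement; it only cites Robinson's Theorem~3.3.8, whose proof follows the same plan. There is, however, a genuine gap in Step~4.

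After Step~3 you have $G=K_1\times M=K_1\times L$, with $M=H_2\times\cdots\times H_r$ and $L=K_2\times\cdots\times K_s$. In general $M$ and $L$ are \emph{different} complements of $K_1$. They are forced to coincide when $Z(G)=1$, which is why the issue never shows up in the paper's application. Running the induction in $G/K_1$, in $M$, or in $L$ can only relate $H_i$ to the image of $K_i$ in $M$, or the image of $H_i$ in $L$ to $K_i$. Extending by the identity on $K_1$ and composing with the Step~3 automorphism therefore does not send $H_i$ to $K_i$.

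Concretely, take $G=\langle a\rangle\times\langle b\rangle\cong C_2^2$ with $H_1=K_1=\langle a\rangle$, $H_2=\langle b\rangle$, $K_2=\langle ab\rangle$. Your Step~3 automorphism is $\id$, every version of the inductive step returns $\id$, and the composite fixes $H_2\neq K_2$.

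The missing ingredient is a central isomorphism between the two complements. Let $\rho'$ be the projection of $G=K_1\times L$ onto $L$. The map equal to the identity on $K_1$ and to $\rho'$ on $M$ is an automorphism of $G$. It is central because $m^{-1}\rho'(m)=\rho_1(m)^{-1}$, and $[M,K_1]\le M\cap K_1=1$ forces $\rho_1(m)\in Z(G)$. It must be inserted between your Step~3 automorphism and the inductive automorphism of $L$.

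Alternatively, stay inside $G$ throughout. At stage $k$, apply the exchange to $H_{k+1}$ in $G=K_1\times\cdots\times K_k\times H_{k+1}\times\cdots\times H_r$. The maps $\theta_l$ with $l\le k$ vanish, because the projection onto $H_{k+1}$ kills $K_l$, so the partner found is a new factor. The $r$ exchange automorphisms, each fixing the factors already exchanged, then compose to the required $\alpha$, and $r=s$ falls out.

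There is also a correction in Step~3, which you flagged as the crux: your two justifications are attached to the wrong factors. Normality of $\rho_1$ is what makes $h^{-1}\rho_1(h)$ commute with $H_j$ for $j\ge 2$, since $y^{-1}\rho_1(h)y=\rho_1(y^{-1}hy)=\rho_1(h)$ for $y\in H_j$. Commutation with $H_1$ does not follow from normality at all. In $S_3\times S_3$ with $H_1=K_2=S_3\times 1$ and $H_2=K_1=1\times S_3$, one has $h^{-1}\rho_1(h)=h^{-1}$, which is not central.

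What actually gives commutation with $H_1$ is the Fitting step. Injectivity of $\rho_1|_{H_1}$ means $H_1\cap L=1$, hence $[H_1,L]=1$. So $h^{-1}\rho_1(h)=\rho'(h)^{-1}$ lies in $L$, and therefore commutes with $K_1$. It also commutes with $L$, since both $h$ and $\rho_1(h)$ do, so it is central.

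Likewise, bijectivity of $\alpha$ must be derived from $K_1\cap M=1$ (your injectivity of $\pi_1|_{K_1}$) together with $|H_1|=|K_1|$, since a central endomorphism need not be injective. Finally, in Step~2 the nilpotency of $\theta_2\alpha^{-1}$ comes from Step~1, because it is normal and not injective; it does not follow directly from the nilpotency of $\theta_2$.
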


\begin{theorem}[Remak, {\cite[Theorem~3.3.12]{Robinson1996}}]
\label{thm:remak}
Let $G = \prod_{\lambda \in \Lambda} G_\lambda$ where each $G_\lambda$
is indecomposable with $Z(G_\lambda) = 1$. Then every normal subgroup
$N$ of $G$ that has a complement in $G$ is the direct product of
certain of the $G_\lambda$.
\end{theorem}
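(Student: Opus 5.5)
The plan is to reduce the statement to the Krull--Remak--Schmidt theorem (Theorem~\ref{thm:ks}) and then use the centerlessness hypothesis to make the central automorphism trivial. Throughout, ``has a complement'' will be read in the sense relevant to direct decompositions. That is, there is a normal subgroup $M \trianglelefteq G$ with $N \cap M = 1$ and $NM = G$, so that $G = N \times M$ internally. This reading is necessary. The weaker reading allowing a non-normal complement would make the statement false: $A_3 \trianglelefteq S_3$ has the complement $\langle(1\,2)\rangle$ but is not a product of factors of the indecomposable group $S_3$. Since all groups in the paper are finite, the index set $\Lambda$ is finite.

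First, since $N$ and $M$ are finite, each decomposes as a direct product of indecomposable subgroups, say $N = N_1 \times \cdots \times N_p$ and $M = M_1 \times \cdots \times M_q$. Hence
\[
  G = N_1 \times \cdots \times N_p \times M_1 \times \cdots \times M_q = \prod_{\lambda \in \Lambda} G_\lambda
\]
are two decompositions of $G$ into indecomposable factors. By Theorem~\ref{thm:ks}, $p + q = |\Lambda|$, and there is a central automorphism $\alpha$ of $G$ and a bijection $\{N_1,\dots,N_p,M_1,\dots,M_q\} \to \Lambda$ such that each $N_j$ or $M_j$ is carried by $\alpha$ onto the corresponding $G_\lambda$. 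The direction of $\alpha$ is immaterial, since $\alpha^{-1}$ is also central.

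The key step is to show $\alpha = \id$. The centre of a direct product is the product of the centres, so $Z(G) = \prod_\lambda Z(G_\lambda) = 1$. By definition, a central automorphism satisfies $g^{-1}g^{\alpha} \in Z(G)$ for all $g \in G$. Hence $g^\alpha = g$ for all $g$, and $\alpha$ is trivial.

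Consequently, each $N_j$ is literally equal to some $G_{\lambda_j}$, with the $\lambda_j$ distinct. Therefore $N = \prod_{j} G_{\lambda_j}$ is the direct product of certain of the $G_\lambda$, as claimed. As a by-product, $M$ is the product of the remaining factors.

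I expect the only genuine subtlety to be the interpretation of ``complement'' together with the identification of $Z(G)$ with $\prod_\lambda Z(G_\lambda)$. The latter is routine: an element commutes with everything iff each coordinate is central in its factor. If one wanted to avoid Theorem~\ref{thm:ks}, the fallback would be a direct argument instead. One would project $N$ to each $G_\lambda$ via $\pi_\lambda$ and use $[N,M] = 1$ together with $Z(G_\lambda) = 1$. This shows that $G_\lambda = \pi_\lambda(N) \times \pi_\lambda(M)$ internally in $G_\lambda$. Indecomposability then forces one projection to be trivial, which gives the same conclusion.
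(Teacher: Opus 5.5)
Your proof is correct. The paper gives no proof of this statement; it only quotes it from Robinson. So the useful comparison is between your two arguments.

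\textbf{Main route.} You derive the result from the central-automorphism form of Krull--Remak--Schmidt (Theorem~\ref{thm:ks}). You refine $G=N\times M$ into indecomposables and observe that $Z(G)=\prod_\lambda Z(G_\lambda)=1$ forces every central automorphism to be the identity. This gives literal equality $N_j=G_{\lambda_j}$. The argument is valid in the paper's finite setting, but it has two costs:
\begin{itemize}
\item It genuinely needs the central-automorphism (or exchange) refinement of KRS. The bare ``the factors are pairwise isomorphic'' form would not give equality of subgroups.
\item It needs chain conditions, whereas the quoted statement allows an arbitrary index set $\Lambda$.
\end{itemize}

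\textbf{Fallback route.} Your projection argument works in that full generality and uses nothing beyond $[N,M]=1$ and $Z(G_\lambda)=1$. It needs one more sentence to finish:
\begin{itemize}
\item $\pi_\lambda(N)$ and $\pi_\lambda(M)$ cannot both be trivial, since they generate $G_\lambda\neq 1$. With indecomposability, exactly one of them is trivial.
\item So $\Lambda$ splits into $\Lambda_N=\{\lambda:\pi_\lambda(M)=1\}$ and $\Lambda_M=\{\lambda:\pi_\lambda(N)=1\}$.
\item Hence $N\le\prod_{\Lambda_N}G_\lambda$ and $M\le\prod_{\Lambda_M}G_\lambda$.
\item Since $G=NM$ and $\prod_{\Lambda_N}G_\lambda\cap\prod_{\Lambda_M}G_\lambda=1$, this forces $N=\prod_{\Lambda_N}G_\lambda$.
\end{itemize}

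\textbf{Other remarks.} You are right to insist on a normal complement, and the $A_3\trianglelefteq S_3$ example shows why. This matches how the paper uses the result in Lemma~\ref{lem:factperm}, where complements are lattice complements in $\mathcal{N}(G)$. Your observation that the KRS automorphism is trivial because $Z(G)=1$ is also the correct mechanism for that lemma. The paper's text there instead speaks of inner automorphisms, but KRS supplies central ones.
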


We also use the simplicity of alternating groups.

\begin{theorem}[{\cite[Theorem~3.2.3]{Robinson1996}}]
\label{thm:alternating}
For $k \geq 5$, the alternating group $A_k$ is simple. For $k = 3$,
$A_3 \cong C_3$ is cyclic. For $k = 4$, $A_4$ is solvable with
$V_4 \trianglelefteq A_4$ and $A_4/V_4 \cong C_3$.
\end{theorem}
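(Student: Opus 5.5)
The plan is to treat the three cases separately. The cases $k=3$ and $k=4$ are direct computations, and the substantive part is the classical simplicity argument for $k \geq 5$ via $3$-cycles.

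\emph{Small cases.} For $k=3$, $|A_3| = 3!/2 = 3$, and every group of prime order is cyclic, so $A_3 \cong C_3$. For $k=4$, set $V_4 = \{1, (12)(34), (13)(24), (14)(23)\}$. A direct check shows it is closed under multiplication: the product of two distinct double transpositions is the third one. It is therefore a subgroup, isomorphic to $C_2 \times C_2$. It is normal in $S_4$, hence in $A_4$, because conjugation preserves cycle type and $V_4$ consists of the identity together with all elements of type $(2,2)$. The quotient $A_4/V_4$ has order $12/4 = 3$, so it is $C_3$. Since $V_4$ is abelian and $A_4/V_4$ is cyclic, the series $1 \trianglelefteq V_4 \trianglelefteq A_4$ has abelian factors, and $A_4$ is solvable.

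\emph{Simplicity for $k \geq 5$.} This has three steps.
\begin{enumerate}[(i)]
\item $A_k$ is generated by $3$-cycles. Every element of $A_k$ is a product of an even number of transpositions. The pairs can be rewritten using $(ab)(ac) = (acb)$ and $(ab)(cd) = (acb)(acd)$.
\item All $3$-cycles are conjugate in $A_k$. Given two $3$-cycles, some $g \in S_k$ conjugates one to the other. If $g$ is odd, replace it by $g(de)$, where $d,e$ are two points fixed by the first $3$-cycle; these exist because $k \geq 5$. Consequently, a normal subgroup of $A_k$ containing one $3$-cycle contains all of them, and by (i) it equals $A_k$.
\item Every nontrivial normal subgroup $N \trianglelefteq A_k$ contains a $3$-cycle.
\end{enumerate}

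Step (iii) is the main obstacle, and I would handle it by a case analysis on the cycle structure of some $1 \neq \sigma \in N$. The tool is that for any $\tau \in A_k$ the commutator $\sigma^{-1}\tau^{-1}\sigma\tau$ lies in $N$, and a suitable choice of $3$-cycle $\tau$ moves few points. The cases are as follows.
\begin{enumerate}[(a)]
\item $\sigma$ has a cycle of length $r \geq 4$, say $(a_1 a_2 \cdots a_r)\cdots$. Commuting with $\tau = (a_1a_2a_3)$ yields a $3$-cycle.
\item $\sigma$ has at least two $3$-cycles, say $(abc)(def)\cdots$. Commuting with $(abd)$ yields an element moving at most five points, of type $5$-cycle, which reduces to case (a).
\item $\sigma$ is a single $3$-cycle times disjoint transpositions. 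Then $\sigma^2$ is a $3$-cycle.
\item $\sigma$ is a product of disjoint transpositions, say $(ab)(cd)\cdots$. Commuting with $(abc)$ gives $(ac)(bd)$. Commuting that with $(ace)$, where $e$ is a fifth point, gives a $3$-cycle.
\end{enumerate}
Each commutator is a short explicit verification. The only place $k \geq 5$ is used within (iii) is the availability of the fifth point $e$ in case (d), and that is exactly where $A_4$ fails: there $V_4$ is a genuine proper normal subgroup.
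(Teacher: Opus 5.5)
Your proof is correct. The paper gives no argument of its own here; it only cites Robinson. Your proposal is exactly the standard textbook argument that citation relies on: order counting and cycle types for $k=3,4$, and for $k \geq 5$ generation by $3$-cycles, conjugacy of $3$-cycles in $A_k$, and extraction of a $3$-cycle from any nontrivial normal subgroup via commutators with $3$-cycles.

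The one quibble is cosmetic. The specific commutator outputs depend on the composition convention: for instance, with $\sigma^{-1}\tau^{-1}\sigma\tau$ and right-to-left composition, case (d) gives $(ad)(bc)$ rather than $(ac)(bd)$. In every case the cycle type is as you claim, so the argument goes through unchanged.
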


The normal subgroups of $S_k$ for small $k$ are as follows. For $k = 3$:
$\mathcal{N}(S_3) = \{1, A_3, S_3\}$, a chain of length~$2$. For $k = 4$:
$\mathcal{N}(S_4) = \{1, V_4, A_4, S_4\}$, a chain of length~$3$ (uniqueness of
$V_4$ as the minimal non-trivial normal subgroup follows from the fact that it is
the unique subgroup of order $4$ normal in $S_4$). For $k \geq 5$:
$\mathcal{N}(S_k) = \{1, A_k, S_k\}$, a chain of length~$2$
(since $A_k$ is the unique proper non-trivial normal subgroup by simplicity).

\section{The Normal Subgroup Lattice of a Tower Group}
\label{sec:lattice}

Fix a tower group $G = \prod_{k \geq 3} S_k^{a_k}$, with individual factors
$S_k^{(k,i)}$ for each $k$ with $a_k > 0$ and $1 \leq i \leq a_k$.

\begin{definition}\label{def:triples}
Let $N \in \mathcal{N}(G)$ define the \emph{canonical triple}
$(J(N),\,\mathbf{P}(N),\,H(N))$ by
\begin{align}
  J(N) &:= \bigl\{(k,i) : a_k > 0,\; 1 \leq i \leq a_k,\;
              \pi_{k,i}(N) = S_k\ \text{and}\
              N \cap S_k^{(k,i)} = A_k\bigr\},        \label{eq:J} \\
  P_{k,i}(N) &:= \pi_{k,i}(N) \in \mathcal{N}(S_k),
              \quad (k,i) \notin J(N),                           \label{eq:P} \\
  H(N) &:= \bigl\{(\sgn g_{k,i})_{(k,i) \in J(N)} : g \in N\bigr\}
           \;\leq\; \{\pm 1\}^{J(N)}.                       \label{eq:H}
\end{align}
A triple $(J,\,(P_{k,i})_{(k,i)\notin J},\,H)$ is called \emph{admissible} if
$J \subseteq \{(k,i) : k \geq 3,\; a_k > 0,\; 1 \leq i \leq a_k\}$,
$P_{k,i} \in \mathcal{N}(S_k)$ for $(k,i) \notin J$,
and $H \leq \{\pm1\}^J$ satisfies, for every $(k,i) \in J$:
\begin{enumerate}[(i)]
  \item\label{adm:i} the element $(-1_{(k,i)}) := $ the vector in $\{\pm1\}^J$
        with $-1$ in position $(k,i)$ and $+1$ elsewhere
        does not lie in $H$, and
  \item\label{adm:ii} some element of $H$ has $-1$ in coordinate $(k,i)$.
\end{enumerate}
For any admissible triple $(J,\mathbf{P},H)$, define
\[
  N(J,\mathbf{P},H) :=
  \bigl\{(g_{k,i})_{k,i} \in G : g_{k,i} \in P_{k,i}\ (\forall\, (k,i) \notin J),\;
  (\sgn g_{k,i})_{(k,i) \in J} \in H\bigr\}.
\]
A \emph{sub-product element} is any $N(\emptyset,\mathbf{P},\{()\})
= \prod_{(k,i)} P_{k,i}^{(k,i)}$ with $P_{k,i} \in \mathcal{N}(S_k)$,
where $\{()\}$ denotes the trivial group $\{\pm1\}^\emptyset$
containing only the empty tuple. For $I \subseteq \{(k,i) : k \geq 3,\; a_k > 0,\; 1 \leq i \leq a_k\}$ with
$|I| \geq 2$, the \emph{sign-parity element} is
$D_I := N(I,\,(S_k)_{(k,i)\notin I},\,H_I)$
where $H_I := \{(\varepsilon_{k,i})_{(k,i)\in I} \in \{\pm1\}^I :
\prod_{(k,i)\in I}\varepsilon_{k,i} = 1\}$.
Since $\sgn\colon S_k \to \{\pm 1\}$ is surjective for every $k \geq 3$,
each $D_I$ is normal in $G$ of index~$2$.
\end{definition}

\begin{lemma}[Complete classification of $\mathcal{N}(G)$]\label{lem:lattice}
The canonical-triple map $\Phi\colon N \mapsto (J(N),\mathbf{P}(N),H(N))$
is a bijection from $\mathcal{N}(G)$ onto the set of admissible triples.
The inverse sends $(J,\mathbf{P},H)$ to $N(J,\mathbf{P},H)$.

In particular, $\mathcal{N}(G)$ contains all sub-product elements and all
sign-parity elements $D_I$ ($|I| \geq 2$), together with further
\emph{mixed elements}; these three families are pairwise disjoint and
together exhaust $\mathcal{N}(G)$.
\end{lemma}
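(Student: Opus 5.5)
The plan is to prove the two halves of the bijection separately: first that every $N \in \mathcal{N}(G)$ equals $N(\Phi(N))$ with $\Phi(N)$ admissible, then that every admissible triple produces a normal subgroup whose canonical triple is itself. The key tool is a local commutator argument on each factor, rather than a full iterated use of Goursat's Lemma (Theorem~\ref{thm:goursat}). Fix $N \trianglelefteq G$ and a slot $(k,i)$. Both $\pi_{k,i}(N)$ and $N \cap S_k^{(k,i)}$ are normal in $S_k$, and $N \cap S_k^{(k,i)} \le \pi_{k,i}(N)$. For $g \in N$ and $s \in S_k^{(k,i)}$, normality gives $[g,s] \in N \cap S_k^{(k,i)}$, and this commutator equals $[g_{k,i},s]$ placed in slot $(k,i)$. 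Hence
\[
  [\pi_{k,i}(N),\,S_k] \;\le\; N \cap S_k^{(k,i)} \;\le\; \pi_{k,i}(N).
\]

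The main step, and the one needing care, is to run through the chains $\mathcal{N}(S_k)$ listed after Theorem~\ref{thm:alternating}. The relevant commutators are $[S_k,S_k]=A_k$ for all $k\ge 3$ (including $k=4$), $[A_k,S_k]=A_k$ for all $k \ge 3$ (for $k=3$ because $S_3$ acts on $A_3 \cong C_3$ by inversion, for $k=4$ because $A_4$ is not central), and $[V_4,S_4]=V_4$. From these, $N \cap S_k^{(k,i)} = \pi_{k,i}(N)$ in every case except $\pi_{k,i}(N)=S_k$ with $N\cap S_k^{(k,i)}=A_k$, which is exactly the condition $(k,i)\in J(N)$. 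Therefore $N$ contains
\[
  M := \prod_{(k,i)\notin J} P_{k,i}^{(k,i)} \times \prod_{(k,i)\in J} A_k^{(k,i)},
\]
and $N$ is contained in $\prod_{\notin J} P_{k,i}\times\prod_{J} S_k$. The quotient of the latter by $M$ is $\{\pm1\}^J$ via the signs, and the image of $N$ there is $H(N)$. Since $M \le N$, $N$ is the full preimage of $H(N)$, that is, $N = N(J,\mathbf{P},H)$.

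For admissibility, condition~(ii) holds because $\pi_{k,i}(N)=S_k$ supplies some $g\in N$ with $g_{k,i}$ odd. For~(i), suppose some $g\in N$ has sign vector $-1_{(k,i)}$. Its coordinates outside $J$ lie in $P_{k,i}\le N$, and its coordinates in $J\setminus\{(k,i)\}$ lie in $A_k\le N$. Multiplying $g$ by the inverses of these coordinates leaves an odd element of $N\cap S_k^{(k,i)}$, contradicting $N\cap S_k^{(k,i)}=A_k$. So $\Phi(N)$ is admissible and $\Phi$ is injective with left inverse $N(\cdot)$.

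Conversely, given an admissible triple, $N(J,\mathbf{P},H)$ is a subgroup because $\sgn$ is a homomorphism. It is normal because conjugation preserves each $P_{k,i}\trianglelefteq S_k$ and preserves signs. To recover its canonical triple, note first that for $(k,i)\in J$ the projection is all of $S_k$ by~(ii). The intersection with $S_k^{(k,i)}$ is exactly $A_k$ by~(i), since all other coordinates of such an element are trivial, so its sign vector is $\pm 1_{(k,i)}$. For $(k,i)\notin J$, both the projection and the intersection equal $P_{k,i}$, because coordinates in the other slots can be taken trivial; hence $(k,i)\notin J(N)$ unless $P_{k,i}=S_k$ and the intersection is $A_k$, which does not happen. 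Thus $J(N)=J$ and $\mathbf{P}(N)=\mathbf{P}$, and $H(N)=H$ is immediate. Finally, the three families are defined by mutually exclusive conditions on the triple: $J=\emptyset$ for sub-products; $J\neq\emptyset$ with $H=H_J$ and all $P=S_k$ for $D_J$; everything else for mixed elements. Here one checks that $|J|=1$ is impossible, since (i) and (ii) contradict each other, so $D_J$ with $|J|\ge2$ covers the second case. Disjointness and exhaustion then follow from bijectivity.
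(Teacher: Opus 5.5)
Your proof follows the paper's structure. Your inclusion $[\pi_{k,i}(N),S_k]\le N\cap S_k^{(k,i)}$ is the normality condition the paper extracts from Goursat's lemma on $S_k^{(k,i)}\times G_{(k,i)}'$; deriving it directly by a commutator is cleaner. Your $M$ is the paper's $K=\prod_{(k,i)}(N\cap S_k^{(k,i)})$, and the full-preimage argument, the admissibility checks and the converse verification match the paper step for step. You also rightly skip the paper's Key Claim and character-extension digression, which the bijection does not need.

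There is one step whose justification fails, and it sits at the crux. You assert $[A_4,S_4]=A_4$ ``because $A_4$ is not central''. Non-centrality only gives $[A_4,S_4]\neq 1$. Since $[A_4,S_4]$ is a normal subgroup of $S_4$ contained in $A_4$, both $V_4$ and $A_4$ remain possible. Ruling out $V_4$ is exactly what excludes the pair $(\pi_{4,i}(N),\,N\cap S_4^{(4,i)})=(A_4,V_4)$, that is, a $C_3$-coupling. If that pair survived, such a slot would lie outside $J(N)$ with $P_{4,i}=A_4$ but $N\cap S_4^{(4,i)}=V_4$. Then $M\le N$ would fail and $N=N(J,\mathbf{P},H)$ would break down.

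The fact is true, and the repair is your own $k=3$ argument passed through $S_4/V_4\cong S_3$. A transposition $t$ inverts the generator of $A_4/V_4\cong C_3$. Hence for $a\in A_4\setminus V_4$ we get $[a,t]\equiv a^2\not\equiv 1 \pmod{V_4}$. Equivalently, as in the paper, $[(12),(123)]$ is a $3$-cycle, and the normal closure of a $3$-cycle in $S_4$ is $A_4$.

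A smaller omission concerns the family statement. You show that an admissible triple with $H=H_J$ and all $P_{k,i}=S_k$ must have $|J|\ge 2$. You should also check the converse: $(I,(S_k)_{(k,i)\notin I},H_I)$ is admissible for every $|I|\ge 2$. Condition (i) holds because $-1_{(k,i)}$ has product $-1$; condition (ii) holds by pairing $(k,i)$ with a second index of $I$. Only then is $\Phi(D_I)=(I,(S_k)_{(k,i)\notin I},H_I)$, so every $D_I$ lands in the second family and disjointness follows from injectivity of $\Phi$.
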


\begin{proof}
\
For any triple $(J,\mathbf{P},H)$, that is not necessarily admissible,
$N(J,\mathbf{P},H)$ is normal in $G$.
For $h = (h_{k,i})_{k,i} \in G$ and $g = (g_{k,i})_{k,i} \in N(J,\mathbf{P},H)$,
for $(k,i) \notin J$, $h_{k,i} g_{k,i} h_{k,i}^{-1} \in P_{k,i}$ since
$P_{k,i} \trianglelefteq S_k$;
for $(k,i) \in J$, $\sgn(h_{k,i} g_{k,i} h_{k,i}^{-1}) = \sgn(g_{k,i})$ since $\sgn$
is a group homomorphism. Therefore conjugation preserves both conditions.

\medskip

Fix $N \in \mathcal{N}(G)$ and any pair $(k,i)$ with $a_k > 0$, $1 \leq i \leq a_k$.
By applying Theorem~\ref{thm:goursat} to the bipartition
$G = S_k^{(k,i)} \times G_{(k,i)}'$, where
$G_{(k,i)}' = \prod_{(k',i') \neq (k,i)} S_{k'}^{(k',i')}$,
set  $N_{k,i} := \pi_{k,i}(N)$ and $M_{k,i} := N \cap S_k^{(k,i)}$.
The normality condition requires $[S_k,\,N_{k,i}] \leq M_{k,i}$.
We determine all pairs $(N_{k,i}, M_{k,i})$ with
$M_{k,i} \trianglelefteq N_{k,i}$ such that $N_{k,i} \in \mathcal{N}(S_k)$.
The trivial pairs $(N_{k,i}, N_{k,i})$ always satisfy
$[S_k, N_{k,i}] \leq N_{k,i}$ and give quotient~$1$.
Among non-trivial pairs $M_{k,i} \subsetneq N_{k,i}$:

For $k \neq 4$, where $\mathcal{N}(S_k) = \{1, A_k, S_k\}$,
the non-trivial pairs are:
\begin{itemize}
  \item $(S_k,\,A_k)$: $[S_k, S_k] = A_k \leq A_k$.
    Valid; quotient $C_2$.
  \item $(S_k,\,1)$: $[S_k,S_k] = A_k \not\leq 1$.
    Invalid.
  \item $(A_k,\,1)$: $[S_k, A_k] = A_k \not\leq 1$.
    ($[S_k, A_k] = A_k$: for $k = 3$,
    $[(12),(123)] = (123)$ generates $A_3$;
    for $k \geq 5$, $A_k$ is simple and non-abelian.)
    Invalid.
\end{itemize}

For $k = 4$, where $\mathcal{N}(S_4) = \{1, V_4, A_4, S_4\}$,
the six non-trivial pairs are:
\begin{itemize}
  \item $(S_4,\,A_4)$: $[S_4, S_4] = A_4 \leq A_4$.
    Valid; quotient $C_2$.
  \item $(S_4,\,V_4)$: $[S_4,S_4] = A_4 \not\leq V_4$
    ($A_4$ contains $3$-cycles; $V_4$ does not). Invalid.
  \item $(S_4,\,1)$: $A_4 \not\leq 1$. Invalid.
  \item $(A_4,\,V_4)$: $[S_4,A_4] = A_4 \not\leq V_4$.
    ($[S_4,A_4] = A_4$: since $[(12),(123)] = (123)$ and
    $3$-cycles generate $A_4$.) Invalid.
  \item $(A_4,\,1)$: $[S_4,A_4] = A_4 \not\leq 1$. Invalid.
  \item $(V_4,\,1)$: $[S_4,V_4] = V_4 \not\leq 1$.
    ($[S_4,V_4] = V_4$: since $[(12),(13)(24)] = (12)(34) \in V_4$
    and $V_4$ is the minimal normal subgroup of $S_4$.) Invalid.
\end{itemize}
Hence $N_{k,i}/M_{k,i} \in \{1,C_2\}$, with $N_{k,i}/M_{k,i} \cong C_2$ if and only
if $(N_{k,i}, M_{k,i}) = (S_k, A_k)$, which is precisely the condition
$(k,i) \in J(N)$.

\smallskip
\noindent\textit{Case~A: Trivial coupling when $(k,i) \notin J(N)$.}
The coupling quotient is $N_{k,i}/M_{k,i} \cong 1$, so
$\pi_{k,i}(N) = N \cap S_k^{(k,i)} = P_{k,i}(N)$.

\smallskip
\noindent\textit{Case~B: $C_2$-coupling when $(k,i) \in J(N)$.}
With $N_{k,i} = S_k$ and $M_{k,i} = A_k$, the Goursat isomorphism
$\varphi\colon N^{(k,i)}/M^{(k,i)} \xrightarrow{\;\sim\;} \{\pm 1\}$
(where $N^{(k,i)} = \pi_{G_{(k,i)}'}(N)$ and $M^{(k,i)} = N \cap G_{(k,i)}'$)
induces $\chi := \varphi \circ \pi \colon N^{(k,i)} \to \{\pm 1\}$
with $\ker\chi = M^{(k,i)}$.

\smallskip
\noindent\textit{Key Claim: $E \leq M^{(k,i)}$, where
$E := N^{(k,i)} \cap \prod_{(k',i') \neq (k,i)} A_{k'}^{(k',i')}$.}
Suppose for the sake of contradiction that $h \in E$ satisfies $\chi(h) = -1$.
Then there exists some $(g_{k,i}, h) \in N$ and has $\sgn g_{k,i} = -1$.
Since $h \in \prod_{(k',i') \neq (k,i)} A_{k'}^{(k',i')}$, every component
$h_{k',i'}$ is even.
Construct $\kappa \in N$ as follows: for $(k',i') \notin J(N)$,
$(k',i') \neq (k,i)$, set $\kappa_{k',i'} := h_{k',i'}$,
which lies in $P_{k',i'}(N) = N \cap S_{k'}^{(k',i')}$ (since
$h \in N^{(k,i)}$ forces $h_{k',i'} \in \pi_{k',i'}(N)$, and
$\pi_{k',i'}(N) = N \cap S_{k'}^{(k',i')}$ by Case~A);
for $(k',i') \in J(N)$, $(k',i') \neq (k,i)$, set $\kappa_{k',i'} := h_{k',i'}$,
which lies in $A_{k'} = N \cap S_{k'}^{(k',i')}$ since $\sgn h_{k',i'} = +1$;
and set $\kappa_{k,i} := e \in A_k = N \cap S_k^{(k,i)}$.
Since each element with $\kappa_{k',i'}$ in position $(k',i')$ and identity
elsewhere lies in $N$, and elements from distinct factors commute, $\kappa \in N$.
Since $\kappa_{k',i'} = h_{k',i'}$ for all $(k',i') \neq (k,i)$ and
$\kappa_{k,i} = e$, the product $(g_{k,i}, h) \cdot \kappa^{-1}$ in $G$ has
$(k,i)$-th coordinate $g_{k,i} \cdot e^{-1} = g_{k,i}$ and $(k',i')$-th coordinate
$h_{k',i'} \cdot h_{k',i'}^{-1} = e$ for each $(k',i') \neq (k,i)$.
Hence $(g_{k,i}, e, \ldots, e) = (g_{k,i}, h) \cdot \kappa^{-1} \in N \cap
S_k^{(k,i)} = A_k$, contradicting $\sgn g_{k,i} = -1$.
Hence $E \leq M^{(k,i)}$.

\smallskip
\noindent\textit{Character extension.}
Since $\ker(\psi|_{N^{(k,i)}}) = E \leq \ker\chi$, where
$\psi \colon N^{(k,i)} \to \{\pm 1\}^{\{(k',i') \neq (k,i)\}}$ denotes the sign map
$n \mapsto (\sgn n_{k',i'})_{(k',i') \neq (k,i)}$, the character $\chi$ factors
through $\psi(N^{(k,i)})$ via a unique
$\mathbb{F}_2$-linear map $f \colon \psi(N^{(k,i)}) \to \{\pm 1\}$
satisfying $\chi = f \circ \psi|_{N^{(k,i)}}$.

Every $\mathbb{F}_2$-linear functional on a subspace of $\{\pm 1\}^{\{(k',i') \neq (k,i)\}}$
is the restriction of a coordinate-product functional, so there exists
$L \subseteq \{(k',i') : (k',i') \neq (k,i)\}$ such that
$f(v) = \prod_{(k',i') \in L} v_{k',i'}$ for all $v \in \psi(N^{(k,i)})$.
(The set $L$ is not uniquely determined by $f$; two sets $L$ and $L'$
represent the same $f$ on $\psi(N^{(k,i)})$ if and only if
$\prod_{(k',i') \in L \triangle L'} \sgn n_{k',i'} = +1$ for every
$n \in N^{(k,i)}$; the character $\chi = f \circ \psi|_{N^{(k,i)}}$ is
independent of the choice.)
Hence
\[
  \chi(n) \;=\; \prod_{(k',i') \in L} \sgn n_{k',i'}
  \quad\text{for all } n \in N^{(k,i)},
\]
and consequently $g \in N$ if and only if $(g_{k',i'})_{(k',i') \neq (k,i)} \in N^{(k,i)}$
and $\sgn g_{k,i} = \prod_{(k',i') \in L}\sgn g_{k',i'}$.
This coupling contributes $(k,i)$ to $J(N)$ and contains the resulting joint
sign constraint within $H(N)$.
The set $J(N)$ may properly contain $J(N^{(k,i)}) \cup \{(k,i)\}$: when $\chi$
involves factors not previously sign-active in $G_{(k,i)}'$, the $C_2$-coupling
to factor $(k,i)$ activates new sign correlations among those factors, producing
\emph{mixed elements} of types not captured by the two classical families.

\medskip
Surjectivity is established in the following steps. 

Let $K := \prod_{(k,i)}(N \cap S_k^{(k,i)})$.
Since each $N \cap S_k^{(k,i)} \leq N$ and we know that elements from distinct
factors commute, $K \leq N$.

\smallskip
\noindent$(\supseteq)$\;
For $n = (n_{k,i})_{k,i} \in N$: for $(k,i) \notin J(N)$,
$n_{k,i} \in \pi_{k,i}(N) = P_{k,i}(N)$ by definition of $\pi_{k,i}$;
and $(\sgn n_{k,i})_{(k,i) \in J(N)} \in H(N)$ by~\eqref{eq:H}.
Hence $n \in N(J(N),\mathbf{P}(N),H(N))$.

\smallskip
\noindent$(\subseteq)$\;
Let $g = (g_{k,i})_{k,i} \in N(J(N),\mathbf{P}(N),H(N))$.
By~\eqref{eq:H} choose $n \in N$ with
$(\sgn n_{k,i})_{(k,i) \in J(N)} = (\sgn g_{k,i})_{(k,i) \in J(N)}$,
and set $h := gn^{-1}$.
For $(k,i) \in J(N)$: $\sgn h_{k,i} = 1$, so
$h_{k,i} \in A_k = N \cap S_k^{(k,i)}$.
For $(k,i) \notin J(N)$: $g_{k,i} \in P_{k,i}(N) = N \cap S_k^{(k,i)}$ (given)
and $n_{k,i} \in \pi_{k,i}(N) = N \cap S_k^{(k,i)}$, so
$h_{k,i} = g_{k,i} n_{k,i}^{-1} \in N \cap S_k^{(k,i)}$.
Hence $h \in K \leq N$, and $g = hn \in N$.

\medskip
Sub-claim:
The triple $\Phi(N)$ is admissible.

\noindent\textit{Condition~\ref{adm:i}.}
Suppose for the sake of contradiction that $(-1_{(k,i)}) \in H(N)$ for some
$(k,i) \in J(N)$.
Then there exists some $n \in N$ that satisfies $\sgn n_{k,i} = -1$ and
$\sgn n_{k',i'} = +1$ for all $(k',i') \in J(N)\setminus\{(k,i)\}$.
Constructing $\kappa \in K$ by setting $\kappa_{k',i'} := n_{k',i'}$ for
$(k',i') \notin J(N)$
(this is valid since $n_{k',i'} \in P_{k',i'}(N) = N \cap S_{k'}^{(k',i')}$),
$\kappa_{k',i'} := n_{k',i'}$ for $(k',i') \in J(N)\setminus\{(k,i)\}$
(valid since $\sgn n_{k',i'} = +1$ gives $n_{k',i'} \in A_{k'} = N \cap S_{k'}^{(k',i')}$),
and $\kappa_{k,i} := e$.
Then $n \kappa^{-1} = (e,\ldots,n_{k,i},\ldots,e) \in N$
with $n_{k,i} \notin A_k$, contradicting $N \cap S_k^{(k,i)} = A_k$.

\noindent\textit{Condition~\ref{adm:ii}.}
Since $(k,i) \in J(N)$ implies $\pi_{k,i}(N) = S_k$, some $g \in N$ has
$\sgn g_{k,i} = -1$, so $H(N)$ is not contained in $\{h_{(k,i)} = +1\}$.

\noindent\textit{Injectivity.}
The triple $(J(N),\mathbf{P}(N),H(N))$ is uniquely recovered from $N$
via~\eqref{eq:J}--\eqref{eq:H}.

\medskip

Given any admissible triple $(J,\mathbf{P},H)$, let $N := N(J,\mathbf{P},H)$
and we verify $\Phi(N) = (J,\mathbf{P},H)$.

\smallskip
\noindent\textit{We have, $J(N) = J$.}
For $(k,i) \in J$, an element with $g_{k,i}$ in position $(k,i)$ and identity
elsewhere lies in $N$ if and only if the sign constraint
$(\ldots,+1,\sgn g_{k,i},+1,\ldots) \in H$
(with $+1$ in every $(k',i')$-coordinate for $(k',i') \in J\setminus\{(k,i)\}$)
holds. By admissibility~\eqref{adm:i}, $(-1_{(k,i)}) \notin H$, so
$g_{k,i} \in A_k$. The identity $(+1,\ldots,+1) \in H$ permits every
$g_{k,i} \in A_k$. Hence $N \cap S_k^{(k,i)} = A_k$.
By admissibility~\eqref{adm:ii}, for some $h \in H$ has $h_{(k,i)} = -1$.
Choose $g_{k',i'} \in S_{k'}$ with $\sgn g_{k',i'} = h_{(k',i')}$ for
$(k',i') \in J$ and any $g_{k',i'} \in P_{k',i'}$ for $(k',i') \notin J$;
this gives $g \in N$ with $\sgn g_{k,i} = -1$, so $\pi_{k,i}(N) = S_k$.
Hence $(k,i) \in J(N)$.
For $(k,i) \notin J$: the element with $g_{k,i}$ in position $(k,i)$ and
identity elsewhere lies in $N$ if and only if
$g_{k,i} \in P_{k,i}$ (the uncoupled condition) and the coupled sign pattern
$(+1,\ldots,+1) \in H$ (the identity is always in $H$).
Hence $N \cap S_k^{(k,i)} = P_{k,i} = \pi_{k,i}(N)$. This gives a trivial
coupling $N_{k,i}/M_{k,i} = 1$ and $(k,i) \notin J(N)$.

\smallskip
Let \noindent\textit{$\mathbf{P}(N) = \mathbf{P}$ and $H(N) = H$.}
For $(k,i) \notin J(N) = J$, $P_{k,i}(N) = \pi_{k,i}(N) = P_{k,i}$, so
$\mathbf{P}(N) = \mathbf{P}$.
For $H(N)$: by definition
$H(N) = \{(\sgn g_{k,i})_{(k,i) \in J} : g \in N\}$.
Every $g \in N(J,\mathbf{P},H)$ satisfies $(\sgn g_{k,i})_{(k,i) \in J} \in H$,
giving $H(N) \subseteq H$; and for every $h \in H$, choosing
$g_{k',i'} \in S_{k'}$ with $\sgn g_{k',i'} = h_{(k',i')}$ for $(k',i') \in J$
and any $g_{k',i'} \in P_{k',i'}$ for $(k',i') \notin J$ gives $g \in N$
with sign-pattern $h$, giving $H \subseteq H(N)$.
Hence $H(N) = H$.

\smallskip
Combined with injectivity (from above) , $\Phi$ is a bijection from
$\mathcal{N}(G)$ onto the set of admissible triples.
\end{proof}

\begin{remark}[New element types in $\mathcal{N}(S_3^3)$]\label{rem:examples}
For $G = S_3^3$, Lemma~\ref{lem:lattice} gives $|\mathcal{N}(G)| = 38$;
the two classical families account for only $27 + 4 = 31$ elements.
The seven additional mixed elements illustrate the complete classification.
Index the three copies of $S_3$ as $(3,1),(3,2),(3,3)$.

\smallskip
We classify these seven elements according to the size of the coupling
set $J$ in their admissible triple $(J,\mathbf{P},H)$. The elements with
$|J| = 2$ arise when a sign constraint couples precisely two factors
while the third is restricted to a proper normal subgroup (Type~1),
and the unique element with $|J| = 3$ (Type~2) arises when all three
factors participate in a single joint sign condition.

\smallskip
\textbf{Type~1: $D_I \cap (S_3^2 \times N_r)$ (sign condition on two factors
with a constrained third factor).}
For $I = \{(3,1),(3,2)\}$ and $N_r \in \{A_3, \{e\}\}$, the admissible triple
$J = \{(3,1),(3,2)\}$, $P_{3,3} = N_r$, $H = H_{\{(3,1),(3,2)\}}$
yields a normal subgroup of order $18|N_r|$ (index~$12$ or~$4$).
Both projections $\pi_{3,1}, \pi_{3,2}$ are surjective onto $S_3$,
while $\pi_{3,3}(N) = N_r \subsetneq S_3$. The factors $(3,1)$ and $(3,2)$
are sign-correlated, while $(3,3)$ is simply restricted to $N_r$.
Varying $I$ over the three pairs in $\{(3,1),(3,2),(3,3)\}$ and $N_r$ over
$\{A_3, \{e\}\}$ produces $3 \times 2 = 6$ mixed elements
(of orders~$54$ and~$18$, respectively).

\smallskip
\textbf{Type~2: all-same-sign element (joint sign condition on all factors).}
The admissible triple $J = \{(3,1),(3,2),(3,3)\}$, $\mathbf{P} = \emptyset$,
$H = \{(+1,+1,+1),(-1,-1,-1)\}$ yields
\[
  E := \{(g_{3,1},g_{3,2},g_{3,3}) \in S_3^3 :
         \sgn g_{3,1} = \sgn g_{3,2} = \sgn g_{3,3}\},
\]
a normal subgroup of order~$54$.
All three factors are sign-active with $J(E) = \{(3,1),(3,2),(3,3)\}$.
The element $E$ coincides with the lattice meet
$D_{\{(3,1),(3,2)\}} \wedge D_{\{(3,2),(3,3)\}}$.
Together with the six elements of Type~1, this accounts for all
$7$ mixed elements and combined with $27$ sub-products and $4$ sign-parity
elements, the total $38 = |\mathcal{N}(S_3^3)|$ is exact
(verified using Sage~\cite{Sage2024} and GAP~\cite{GAP2022}).
\end{remark}

\begin{lemma}[Inclusion Characterization]\label{lem:incl-char}
For $N = N(J,\mathbf{P},H) \in \mathcal{N}(G)$ and each pair $(k,i)$ with $a_k > 0$,
define the \emph{effective component}
\[
  \widetilde{P}_{k,i}(N) \;:=\;
  \begin{cases}
    P_{k,i} & (k,i) \notin J, \\
    S_k & (k,i) \in J,
  \end{cases}
  \qquad \widetilde{P}_{k,i}(N) \in \mathcal{N}(S_k).
\]
For $P \in \mathcal{N}(S_k)$, write $\sgn(P) := \{\sgn g : g \in P\} \subseteq \{\pm 1\}$;
explicitly,
\[
  \sgn(P) \;=\; \{+1\} \quad\text{if } P \neq S_k\ (\text{equivalently } P \leq A_k),
  \qquad \sgn(P) \;=\; \{\pm 1\} \quad\text{if } P = S_k.
\]
Let $N_s = N(J_s, \mathbf{P}_s, H_s)$ for $s = 1,2$, with elements of $H_s$
written as tuples $h_s = (h_{s,(k,i)})_{(k,i) \in J_s} \in \{\pm 1\}^{J_s}$.
Then $N_1 \leq N_2$ in $\mathcal{N}(G)$ if and only if both:
\begin{enumerate}[(a)]
  \item\label{ic:a}
    $\widetilde{P}_{k,i}(N_1) \leq \widetilde{P}_{k,i}(N_2)$ in $\mathcal{N}(S_k)$
    for every $(k,i)$ with $a_k > 0$;
  \item\label{ic:b}
    for every $h_1 \in H_1$ and every choice
    $\boldsymbol{\epsilon} = (\epsilon_{k,i})_{(k,i) \in J_2 \setminus J_1}$ with
    $\epsilon_{k,i} \in \sgn(\widetilde{P}_{k,i}(N_1))$, the combined sign pattern
    $h^* \in \{\pm 1\}^{J_2}$ defined by
    \[
      h^*_{(k,i)} \;:=\;
      \begin{cases} h_{1,(k,i)} & (k,i) \in J_1 \cap J_2, \\
                    \epsilon_{k,i} & (k,i) \in J_2 \setminus J_1, \end{cases}
    \]
    satisfies $h^* \in H_2$.
\end{enumerate}
\end{lemma}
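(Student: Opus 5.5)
The proof works directly with the explicit description of $N(J,\mathbf{P},H)$ from Lemma~\ref{lem:lattice}. Membership of $g=(g_{k,i})$ in $N_s$ splits into \emph{local} conditions ($g_{k,i}\in P_{s,k,i}$ for $(k,i)\notin J_s$) and one \emph{global} sign condition ($(\sgn g_{k,i})_{J_s}\in H_s$). The first step is to record two facts about $N_1$ that follow from admissibility.

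\begin{enumerate}[(1)]
\item $\pi_{k,i}(N_1)=\widetilde P_{k,i}(N_1)$ for every $(k,i)$. This was shown in the ``$J(N)=J$'' part of the proof of Lemma~\ref{lem:lattice}, using condition~(ii) for coordinates in $J_1$.
\item The set of sign patterns realised by $N_1$ on $J_1\cup J_2$ is exactly the set of vectors whose $J_1$-part lies in $H_1$ and whose coordinate at each $(k,i)\in J_2\setminus J_1$ lies in $\sgn(P_{1,k,i})$.
\end{enumerate}

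Fact (2) holds because the coordinates outside $J_1$ are unconstrained inside $P_{1,k,i}$, independently of the $J_1$-coordinates and of each other. So one can choose $g_{k,i}\in P_{1,k,i}$ with any prescribed sign in $\sgn(P_{1,k,i})$, and choose $J_1$-coordinates realising any $h_1\in H_1$.

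\emph{Sufficiency.} Assume (a) and (b), and let $g\in N_1$.
\begin{itemize}
\item For $(k,i)\notin J_2$, condition (a) gives $g_{k,i}\in\pi_{k,i}(N_1)=\widetilde P_{k,i}(N_1)\le\widetilde P_{k,i}(N_2)=P_{2,k,i}$.
\item For the sign condition, the $J_1\cap J_2$ signs of $g$ come from some $h_1\in H_1$. The $J_2\setminus J_1$ signs are $\epsilon_{k,i}=\sgn g_{k,i}\in\sgn(P_{1,k,i})=\sgn(\widetilde P_{k,i}(N_1))$. So the pattern of $g$ on $J_2$ is exactly an $h^*$ as in (b), and hence lies in $H_2$.
\end{itemize}
Therefore $g\in N_2$.

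\emph{Necessity.} Assume $N_1\le N_2$.
\begin{itemize}
\item Projecting gives $\pi_{k,i}(N_1)\le\pi_{k,i}(N_2)$, and fact (1) applied to both $N_1$ and $N_2$ turns this into (a).
\item For (b), fix $h_1$ and an admissible $\boldsymbol\epsilon$. By fact (2), choose $g\in N_1$ realising $h_1$ on $J_1$ and $\epsilon_{k,i}$ at each $(k,i)\in J_2\setminus J_1$. Then $g\in N_2$, so its $J_2$-pattern, which is precisely $h^*$, lies in $H_2=H(N_2)$.
\end{itemize}

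The main obstacle is the independence claim in fact (2). One must check that prescribing signs on $J_2\setminus J_1$ does not interfere with realising $h_1$ on $J_1$. This needs only that $J_1$ and $J_2\setminus J_1$ are disjoint index sets and that membership in $N_1$ imposes no joint condition on coordinates outside $J_1$. There is a subtlety when $(k,i)\in J_1\setminus J_2$: there $\widetilde P_{k,i}(N_1)=S_k$, so (a) forces $P_{2,k,i}=S_k$. Such coordinates are then unconstrained in $N_2$, which is consistent with both directions of the argument and needs no separate case.
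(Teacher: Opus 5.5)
Your proof is correct and follows the paper's argument essentially step for step. Necessity comes from monotonicity of the projections together with $\pi_{k,i}(N_s)=\widetilde P_{k,i}(N_s)$, plus an explicitly constructed $g\in N_1$ realising $h^*$, and sufficiency from checking the local and sign conditions for an arbitrary $g\in N_1$. The only cosmetic difference is that you treat all coordinates $(k,i)\notin J_2$ at once via $g_{k,i}\in\pi_{k,i}(N_1)=\widetilde P_{k,i}(N_1)$. The paper instead splits into $(k,i)\notin J_1$ and $(k,i)\in J_1$, the latter forcing $P_{2,k,i}=S_k$, which is exactly the subtlety you flag at the end.
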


\begin{proof}
Assume $N_1 \subseteq N_2$.

\emph{(a):} The projection $\pi_{k,i}\colon G \to S_k$ is monotone, so
$\pi_{k,i}(N_1) \leq \pi_{k,i}(N_2)$ in $\mathcal{N}(S_k)$. By
Lemma~\ref{lem:lattice}, $\pi_{k,i}(N_s) = \widetilde{P}_{k,i}(N_s)$ in either case
($(k,i) \in J_s$ gives $\pi_{k,i}(N_s) = S_k$, and $(k,i) \notin J_s$ gives
$\pi_{k,i}(N_s) = P_{s,k,i}$).

\emph{(b):} Fix $h_1 \in H_1$ and $\boldsymbol{\epsilon}$ as above.
Construct $g \in N_1$ by choosing:
for $(k,i) \in J_1$, any $g_{k,i} \in S_k$ with $\sgn g_{k,i} = h_{1,(k,i)}$
(possible since $\sgn$ is surjective on $S_k$);
for $(k,i) \in J_2 \setminus J_1$, any $g_{k,i} \in P_{1,k,i}$ with
$\sgn g_{k,i} = \epsilon_{k,i}$
(possible since $\epsilon_{k,i} \in \sgn(P_{1,k,i})$);
for $(k,i) \notin J_1 \cup J_2$, any $g_{k,i} \in P_{1,k,i}$.
Then $g_{k,i} \in P_{1,k,i}$ for all $(k,i) \notin J_1$ and
$(\sgn g_{k,i})_{(k,i) \in J_1} = h_1 \in H_1$, so $g \in N_1$.
Since $N_1 \subseteq N_2$, $g \in N_2$, and the $J_2$-pattern of $g$ is
exactly $h^*$, hence $h^* \in H_2$.

Conversely assume \eqref{ic:a} and \eqref{ic:b}. Let $g \in N_1$,
we show that $g \in N_2$.

\emph{Per-coordinate condition for $(k,i) \notin J_2$.}
If $(k,i) \notin J_1$, then
$g_{k,i} \in P_{1,k,i} = \widetilde{P}_{k,i}(N_1) \leq \widetilde{P}_{k,i}(N_2) = P_{2,k,i}$
by~\eqref{ic:a}. If $(k,i) \in J_1$, then~\eqref{ic:a} gives
$S_k = \widetilde{P}_{k,i}(N_1) \leq \widetilde{P}_{k,i}(N_2) = P_{2,k,i}$, forcing
$P_{2,k,i} = S_k$ and so $g_{k,i} \in P_{2,k,i}$ trivially.

\emph{Sign-pattern condition on $J_2$.} Define
$\eta := (\sgn g_{k,i})_{(k,i) \in J_1}$ and
$\boldsymbol{\zeta} := (\sgn g_{k,i})_{(k,i) \in J_2 \setminus J_1}$.
Then $\eta \in H_1$ since $g \in N_1$, and
$\zeta_{k,i} \in \sgn(P_{1,k,i}) = \sgn(\widetilde{P}_{k,i}(N_1))$ since
$g_{k,i} \in P_{1,k,i}$. Apply~\eqref{ic:b} with $h_1 = \eta$ and
$\boldsymbol{\epsilon} = \boldsymbol{\zeta}$: the resulting combined pattern
is exactly $(\sgn g_{k,i})_{(k,i) \in J_2}$, which therefore lies in $H_2$.
Hence $g \in N_2$.
\end{proof}

\begin{remark}\label{rem:incl-char-pi}
The identity $\pi_{k,i}(N) = \widetilde{P}_{k,i}(N)$ used in step~\emph{(a)} is part
of the content of Lemma~\ref{lem:lattice} (Step~2 of its proof): for
$(k,i) \notin J(N)$ the trivial coupling forces $\pi_{k,i}(N) = P_{k,i}(N)$, and for
$(k,i) \in J(N)$ the $C_2$-coupling forces $\pi_{k,i}(N) = S_k$.
\end{remark}

\begin{remark}[Exhaustive description of $\mathcal{N}(G)$ and meet/join
formulas]\label{rem:goursat-structure}
The bijection argument of Lemma~\ref{lem:lattice} partitions $\mathcal{N}(G)$
into three pairwise disjoint families that together exhaust the lattice.
We record their explicit descriptions, cardinalities, and the lattice
operations between sign-parity elements, as these are used throughout
the arguments that follow.

\medskip
\noindent\textbf{(1) Sub-products} correspond to triples with $J = \emptyset$.
Each is a direct product $\prod_{(k,i)} P_{k,i}^{(k,i)}$ with
$P_{k,i} \in \mathcal{N}(S_k)$ chosen independently. This gives
$\prod_{k:\,a_k>0} |\mathcal{N}(S_k)|^{a_k}$ elements in total.

\medskip
\noindent\textbf{(2) Sign-parity elements} correspond to triples
$(I, (S_k)_{(k,i) \notin I}, H_I)$ with $J = I \neq \emptyset$,
where $H_I = \{h \in \{\pm 1\}^I : \prod_{(k,i) \in I} h_{(k,i)} = 1\}$
is the product-one kernel.
Admissibility forces $|I| \geq 2$. If $|I| = 1$, condition~\eqref{adm:i}
requires $(-1_{(k,i)}) \notin H$ while condition~\eqref{adm:ii} requires
some element of $H$ to have $-1$ in coordinate $(k,i)$, this gives a contradiction.
There are accordingly $2^T - T - 1$ sign-parity elements $D_I$,
where $T = \sum_{k:\,a_k>0} a_k$ is the total number of copy-slots,
indexed by subsets $I$ of the full index set $\{(k,i)\}$ with $|I| \geq 2$.

\medskip
\noindent\textbf{(3) Mixed elements} comprise all remaining admissible
triples, those with $J \neq \emptyset$ that fail to be sign-parity
elements, meaning either $H \neq H_J$ or $P_{k,i} \neq S_k$ for
some $(k,i) \notin J$.
Every mixed element is expressible as a finite lattice meet of
sub-products and sign-parity elements. To see this, let
$N = N(J,\mathbf{P},H)$ be mixed and define the sub-product
\[
  S_{\mathbf{P}} \;:=\; \prod_{(k,i) \notin J} P_{k,i}^{(k,i)} \;\times\;
  \prod_{(k,i) \in J} S_k^{(k,i)}.
\]
Identifying $\{\pm1\}^J$ with $\mathbb{F}_2^J$ via the map 
$\sgn(g) \mapsto \tfrac{1-\sgn(g)}{2}$, the subgroup $H$ becomes
a $\mathbb{F}_2$-subspace. By admissibility~\eqref{adm:ii},
for each $(k,i) \in J$, there exists some $h \in H$ has $h_{(k,i)} = -1$,
so the unit functional $e_{(k,i)}^*$ does not vanish on $H$ and hence
$e_{(k,i)}^* \notin H^\perp$.
Therefore, every nonzero element of $H^\perp \leq (\mathbb{F}_2^J)^*$
has support of size at least $2$.
Fix a $\mathbb{F}_2$-basis $\{f_1,\ldots,f_\ell\}$ of $H^\perp$ such that 
each $f_i$ has support $I_i \subseteq J$ with $|I_i| \geq 2$; this implies
$D_{I_i}$ is a valid sign-parity element.
The double-annihilator identity $(H^\perp)^\perp = H$ then gives
$H = \bigcap_{i=1}^\ell \ker(f_i)$, and therefore
\[
  N(J,\mathbf{P},H) \;=\; S_{\mathbf{P}} \;\wedge\; D_{I_1} \;\wedge\; \cdots \;\wedge\; D_{I_\ell}.
\]

\medskip
\noindent\textbf{Lattice operations on sign-parity elements.}
The sign-parity elements form an antichain in $\mathcal{N}(G)$, i.e.,
two distinct elements $D_I$ and $D_J$ are incomparable, since each
has index~$2$ in $G$ and neither contains the other as a proper subgroup.
For distinct $D_I, D_J$ with $|I|, |J| \geq 2$, the meet and join are:
\begin{equation}\label{eq:meet}
  D_I \wedge D_J \;=\; D_I \cap D_J
  \;=\; \Bigl\{g \in G :
  {\textstyle\prod_{(k,i) \in I}}\,\sgn g_{k,i} = 1
  \;\text{ and }\;
  {\textstyle\prod_{(k,i) \in J}}\,\sgn g_{k,i} = 1
  \Bigr\},
\end{equation}
a normal subgroup of index~$4$ in $G$. In the canonical parameterisation,
$D_I \wedge D_J$ has coupling set $J_{\mathrm{meet}} = I \cup J$,
local data $P_{k,i} = S_k$ for $(k,i) \notin I \cup J$, and sign subgroup
\[
  H_{\mathrm{meet}} = \Bigl\{h \in \{\pm1\}^{I \cup J} :
  {\textstyle\prod_{(k,i) \in I}} h_{(k,i)} = 1
  \;\text{ and }\;
  {\textstyle\prod_{(k,i) \in J}} h_{(k,i)} = 1\Bigr\}.
\]
This element is mixed whenever $I$ and $J$ are incomparable under
inclusion, and equals $D_J$ (resp.\ $D_I$) when $I \subseteq J$
(resp.\ $J \subseteq I$).
For the join, since $D_I$ and $D_J$ are distinct subgroups of index~$2$,
their product is all of $G$:
\[
  D_I \vee D_J = G \qquad \text{for } I \neq J.
\]
\end{remark}
\section{The Factor Permutation Lemma}
\label{sec:injection}
We have established a complete combinatorial description of $\mathcal{N}(G)$,
we turn to the structure of $\LatAut(G)$ itself. The central observation is that the complemented elements of
$\mathcal{N}(G)$ are exactly the full sub-products of the individual
factors. Among these, the nontrivial indecomposable complemented
elements are precisely the direct factors $S_k^{(k,i)}$, and this
characterisation is preserved by lattice automorphisms.
It follows that every $\varphi \in \LatAut(G)$
induces a permutation of $\{S_k^{(k,i)}\}$. The chain length
of the local interval $[1, S_k^{(k,i)}]$, equal to $3$ when
$k = 4$ and $2$ otherwise, is a lattice invariant, so this
permutation must moreover respect the bipartition
$\mathcal{A} \sqcup \mathcal{B}$ of all copy-slots.
Composing these observations gives an injective homomorphism
$\LatAut(G) \hookrightarrow S_{a_4} \times S_B$.

\begin{lemma}[Factor Permutation]\label{lem:factperm}
Let $G = \prod_{k \geq 3} S_k^{a_k}$ be a tower group, with individual
factors $S_k^{(k,i)}$ for $k \geq 3$, $a_k > 0$, $1 \leq i \leq a_k$.
Every $\varphi \in \LatAut(G)$ permutes the set of individual factors
$\{S_k^{(k,i)}\}$. Moreover, $\varphi$ preserves the partition
\[
  \mathcal{A} = \{(4,i) : 1 \leq i \leq a_4\}, \qquad
  \mathcal{B} = \{(k,i) : k \geq 3,\; k \neq 4,\; 1 \leq i \leq a_k\},
\]
so the induced permutation lies in $\operatorname{Sym}(\mathcal{A}) \times \operatorname{Sym}(\mathcal{B})
\cong S_{a_4} \times S_B$. This gives an injective homomorphism
\[
  \LatAut(G) \hookrightarrow S_{a_4} \times S_B.
\]
\end{lemma}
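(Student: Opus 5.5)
The plan is to give a purely lattice-theoretic description of the individual factors $S_k^{(k,i)}$ inside $\mathcal{N}(G)$. Any $\varphi\in\LatAut(G)$ must then permute them, and we show that a $\varphi$ fixing every factor is the identity.

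\emph{Step 1: complemented elements.} Call $C\in\mathcal{N}(G)$ complemented if some $M\in\mathcal{N}(G)$ satisfies $C\wedge M=1$ and $C\vee M=G$. Being complemented is defined by meets and joins, so it is invariant under lattice automorphisms. In $\mathcal{N}(G)$, $C\wedge M=1$ and $C\vee M=G$ say exactly that $G=C\times M$ internally with $C,M$ normal. Each $S_k$ ($k\ge 3$) is centerless and directly indecomposable: a nontrivial decomposition would need two nontrivial normal subgroups meeting trivially, and $\mathcal{N}(S_k)$ is a chain. Theorem~\ref{thm:remak} therefore shows that every complemented $C$ is a full sub-product $\prod_{(k,i)\in T}S_k^{(k,i)}$. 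Conversely, each such sub-product has the complementary sub-product as a complement.

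\emph{Step 2: indecomposable complemented elements.} Call a nontrivial complemented $C$ \emph{indecomposable} if it is not $X\vee Y$ for nontrivial complemented $X,Y$ with $X\wedge Y=1$. This notion is again lattice-invariant. By Step 1, $\prod_{T}S_k^{(k,i)}$ is indecomposable exactly when $|T|=1$. Hence every $\varphi$ permutes the set $\{S_k^{(k,i)}\}$. For the partition, note that any normal subgroup of $G$ contained in $S_k^{(k,i)}$ is normal in $S_k$, and every normal subgroup of $S_k$ is normal in $G$. So the interval $[1,S_k^{(k,i)}]$ is isomorphic to $\mathcal{N}(S_k)$, a chain of length $3$ if $k=4$ and $2$ otherwise. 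Since $\varphi$ maps $[1,F]$ isomorphically onto $[1,\varphi(F)]$, it preserves $\mathcal{A}\sqcup\mathcal{B}$. The assignment $\varphi\mapsto$ (induced permutation) is visibly compatible with composition, so it is a homomorphism into $\operatorname{Sym}(\mathcal{A})\times\operatorname{Sym}(\mathcal{B})$.

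\emph{Step 3: injectivity (the main obstacle).} Suppose $\varphi$ fixes every factor $F=S_k^{(k,i)}$. Then $\varphi$ restricts to an automorphism of the finite chain $[1,F]$, so it fixes $[1,F]$ pointwise. Every sub-product $\prod P_{k,i}^{(k,i)}$ is the join of its pieces $P_{k,i}^{(k,i)}\in[1,S_k^{(k,i)}]$, so every sub-product is fixed. Next I would treat the coatoms of $\mathcal{N}(G)$, i.e.\ the maximal normal subgroups $M$, for which $G/M$ is simple. I expect the following. Since $\mathcal{N}(S_k)$ is a chain, the only maximal normal subgroup of $S_k$ is $A_k$, so $S_k$ has no nonabelian simple quotient. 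A nonabelian simple quotient of $G$ would therefore have to come from some factor (by the usual argument for direct products), which is impossible. Abelian simple quotients factor through $G^{\mathrm{ab}}\cong C_2^T$. Hence the coatoms are exactly the index-$2$ subgroups $\ker\bigl(g\mapsto\prod_{(k,i)\in I}\sgn g_{k,i}\bigr)$ for $\emptyset\ne I$. These are the $D_I$ for $|I|\ge2$, together with the sub-products having $A_k$ in one slot when $|I|=1$. Such a coatom contains $S_k^{(k,i)}$ iff $(k,i)\notin I$, so a coatom is determined by which factors lie below it. Since $\varphi$ fixes all factors and permutes the coatoms, it fixes every $D_I$. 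Finally, by Remark~\ref{rem:goursat-structure} every element of $\mathcal{N}(G)$ is a sub-product, a $D_I$, or a finite meet $S_{\mathbf{P}}\wedge D_{I_1}\wedge\cdots\wedge D_{I_\ell}$. Hence $\varphi$ fixes all of $\mathcal{N}(G)$, and the kernel of the homomorphism is trivial.

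The most delicate point is the coatom classification in Step 3, in particular ruling out non-abelian simple quotients of $G$. I would handle it by first arguing that $G$ has no nonabelian simple quotient, using that each $\mathcal{N}(S_k)$ is a chain topped by $A_k$. Failing that, I would fall back on the explicit triples of Lemma~\ref{lem:lattice}, checking via Lemma~\ref{lem:incl-char} which admissible triples are maximal.
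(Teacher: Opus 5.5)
Your proposal is correct and follows the paper's proof. It uses Remak's theorem to identify the complemented elements as full sub-products, and the chain length of $[1,S_k^{(k,i)}]$ for the partition. Injectivity then goes by fixing the chains, then all sub-products, then the index-$2$ coatoms, then the mixed elements via the meet decomposition of Remark~\ref{rem:goursat-structure}. Several of your steps are tidier than the paper's: the explicit indecomposability criterion in Step~2 (needed because every full sub-product has a unique complement, not just a single factor), the justification that every simple quotient of $G$ is $C_2$, and identifying each coatom by the set of factors beneath it instead of using the invariance of $J(N)$.
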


\begin{proof}
Each individual factor $S_k^{(k,i)}$ is an element of $\mathcal{N}(G)$, and we
claim it is a \emph{lattice-theoretic direct factor}. There is a unique complement
$C_{k,i} := \prod_{(k',i') \neq (k,i)} S_{k'}^{(k',i')} \in \mathcal{N}(G)$
satisfying $S_k^{(k,i)} \wedge C_{k,i} = 1$ and
$S_k^{(k,i)} \vee C_{k,i} = G$. Any complement $C$ must satisfy
$C \cap S_k^{(k,i)} = 1$ and $C \cdot S_k^{(k,i)} = G$,
giving $G = S_k^{(k,i)} \times C$ as an internal direct product
of normal subgroups. By Theorem~\ref{thm:remak}, since $Z(G) = 1$
(each $S_k$ has trivial center for $k \geq 3$), the indecomposable
factors of any two direct decompositions of $G$ coincide as subgroups,
not merely up to isomorphism. Since $S_k^{(k,i)}$ is group-indecomposable
(its normal subgroup lattice $\mathcal{N}(S_k)$ is a chain, so $S_k$ admits
no non-trivial direct decomposition),
the complement $C_{k,i}$ is the unique element of $\mathcal{N}(G)$ with
the required meet and join, and $S_k^{(k,i)}$ is recoverable from $\mathcal{N}(G)$
as the element with this unique complement.

No element of $\mathcal{N}(G)$ other than a sub-product has a complement
in $\mathcal{N}(G)$. If $N \cap C = 1$ and $N \vee C = G$ for some
$C \in \mathcal{N}(G)$, then $G = N \times C$ as an internal direct product
of normal subgroups. Since conjugation by any $(n,c) \in N \times C$ acts
on $N$ via conjugation by $n$ alone, every indecomposable direct summand
of $N$ is normal in $G$. The resulting decomposition of $G$ into indecomposable
normal subgroups is then another Krull--Schmidt decomposition of $G$. By
Theorems~\ref{thm:ks} and~\ref{thm:remak}, its factors are permuted images of
$\{S_k^{(k,i)}\}$ under inner automorphisms of $G$. Since each $S_k^{(k,i)}$
is normal in $G$, every inner automorphism fixes it setwise, so the
indecomposable summands of $N$ are exactly $\{S_k^{(k,i)} : (k,i) \in T\}$
for some $T \subseteq \mathcal{A} \cup \mathcal{B}$.
Hence $N = \prod_{(k,i) \in T} S_k^{(k,i)}$, a sub-product element.
Since sign-parity and mixed elements both have $J \neq \emptyset$ in their
canonical triple (Lemma~\ref{lem:lattice}) and neither of them is a sub-product,
neither admits a complement in $\mathcal{N}(G)$.

Since lattice isomorphisms preserve meets, joins, and unique-complement pairs,
every $\varphi \in \LatAut(G)$ sends each $S_k^{(k,i)}$ to some other factor
$S_{k'}^{(k',i')}$. This defines a permutation $\pi_\varphi$ on $\mathcal{A} \cup \mathcal{B}$.
The permutation respects the partition $\{\mathcal{A}, \mathcal{B}\}$ because the
interval $[1, S_k^{(k,i)}]$ in $\mathcal{N}(G)$ is isomorphic to $\mathcal{N}(S_k)$,
which is a chain of length $3$ when $k = 4$ (namely $\{1, V_4, A_4, S_4\}$)
and a chain of length $2$ when $k \geq 3$, $k \neq 4$ (namely
$\{1, A_k, S_k\}$). Chains of distinct lengths are non-isomorphic, so
$\varphi$ sends $\mathcal{A}$-factors to $\mathcal{A}$-factors and $\mathcal{B}$-factors
to $\mathcal{B}$-factors. The induced permutation lies in
$\operatorname{Sym}(\mathcal{A}) \times \operatorname{Sym}(\mathcal{B}) \cong S_{a_4} \times S_B$.

The map $\varphi \mapsto \pi_\varphi$ is a homomorphism since
$\pi_{\varphi \circ \psi} = \pi_\varphi \circ \pi_\psi$ by construction.
It is injective. Suppose $\pi_\varphi = \id$, so $\varphi$ fixes every
factor $S_k^{(k,i)}$.

\medskip

Each interval $[1, S_k^{(k,i)}]$ is a chain, so its only lattice
automorphism is the identity. Hence $\varphi$ fixes every element of
$[1, S_k^{(k,i)}]$, in particular $A_k^{(k,i)}$ (and $V_4^{(4,i)}$
when $k = 4$). Since $\varphi$ fixes every factor $S_k^{(k,i)}$
and preserves joins, it also fixes
$C_{k,i} = \bigvee_{(k',i') \neq (k,i)} S_{k'}^{(k',i')}$.
Every sub-product $\prod_{(k,i)} P_{k,i}^{(k,i)}$ equals the join
of elements from these fixed intervals. Since $\varphi$ preserves joins,
it fixes every sub-product element.

\medskip

For any $N \in \mathcal{N}(G)$, the pair $(k,i)$ lies in $J(N)$ if and only
if $N \wedge S_k^{(k,i)} = A_k^{(k,i)}$ and
$N \vee C_{k,i} = G$.
Since $\varphi$ preserves meets and joins and fixes
$S_k^{(k,i)}$, $A_k^{(k,i)}$, and $C_{k,i}$, it follows
that $J(\varphi(N)) = J(N)$ for every $N$.

The coatoms of $\mathcal{N}(G)$ are precisely the index-$2$ normal subgroups
of $G$. Every surjective homomorphism $G \to \{\pm 1\}$ has the form
$g \mapsto \prod_{(k,i) \in I} \sgn g_{k,i}$ for some non-empty
$I \subseteq \mathcal{A} \cup \mathcal{B}$: since each $S_k$ ($k \geq 3$) has
$A_k$ as its unique index-$2$ subgroup
(Lemma~\ref{lem:lattice}), the only surjective homomorphism
$S_k \to \{\pm 1\}$ is $\sgn$, so any surjective homomorphism
on $G$ is a product of a non-empty selection of these.
The kernel is a sub-product coatom when $|I| = 1$ and a sign-parity element
$D_I$ when $|I| \geq 2$. Hence every coatom of $\mathcal{N}(G)$ is either
a sub-product coatom or a sign-parity element.

Each sign-parity coatom $D_I$ maps under $\varphi$ to a coatom of
$\mathcal{N}(G)$; since $\varphi$ fixes all sub-product coatoms,
$\varphi(D_I) = D_{I'}$ for some sign-parity $D_{I'}$.
To identify $I'$, we verify $J(D_{I'}) = I'$ using both conditions in
Definition~\ref{def:triples}: an element with $g_{k,i}$ in position $(k,i)$
and identity elsewhere lies in $D_{I'}$ if and only if $\sgn(g_{k,i}) = 1$
when $(k,i) \in I'$ (and freely otherwise), giving
$D_{I'} \wedge S_k^{(k,i)} = A_k^{(k,i)}$ if and only if $(k,i) \in I'$;
and $\pi_{k,i}(D_{I'}) = S_k$ for every $(k,i) \in I'$, since $|I'| \geq 2$
permits choosing the sign of one other factor in $I'$ to satisfy the
product constraint for any target $g_{k,i} \in S_k$.
Hence $J(D_{I'}) = I'$, combined with $J(\varphi(D_I)) = J(D_I) = I$,
this gives $I' = I$, so $\varphi(D_I) = D_I$.

\medskip

By Remark~\ref{rem:goursat-structure}, every mixed element is a meet of
sub-product and sign-parity elements. Since $\varphi$ preserves meets
and fixes every sub-product and every sign-parity element, 
it fixes every mixed element. Hence $\varphi = \id$.
\end{proof}

\section{The Realisation Lemma}
\label{sec:realisation}

The injection $\pi \colon \LatAut(G) \hookrightarrow S_{a_4} \times S_B$
of Lemma~\ref{lem:factperm} is an isomorphism if and only if every
element of $\operatorname{Sym}(\mathcal{A}) \times \operatorname{Sym}(\mathcal{B})$
is realised as $\pi_\varphi$ for some $\varphi \in \LatAut(G)$.
We show that for each $\sigma \in \operatorname{Sym}(\mathcal{A}) \times
\operatorname{Sym}(\mathcal{B})$, an explicit $\tau_\sigma \in \LatAut(G)$
with $\pi_{\tau_\sigma} = \sigma$, constructed directly on admissible
triples via the unique chain isomorphism
$\psi_{(k,i),(k',i')} \colon \mathcal{N}(S_k) \to \mathcal{N}(S_{k'})$
between equal-length chains, where $(k',i') := \sigma(k,i)$ denotes the
image of the copy-slot $(k,i)$ under $\sigma$. (The chain isomorphism
depends only on the source type $k$ and target type $k'$, not on the
copy indices $i, i'$; these appear in the subscript only to identify
which copy-slot is being mapped.)
Order preservation is verified using
Lemma~\ref{lem:incl-char} and no case split on whether $k = k'$
is needed.

\begin{lemma}[Realisation]\label{lem:realise}
Let $\sigma = (\sigma_\mathcal{A}, \sigma_\mathcal{B})$ be any element of
$\operatorname{Sym}(\mathcal{A}) \times \operatorname{Sym}(\mathcal{B})$.
Then there exists $\tau_\sigma \in \LatAut(G)$ with $\pi_{\tau_\sigma} = \sigma$.
Hence the map from Lemma~\ref{lem:factperm} is an isomorphism, and thus
\[
  \LatAut(G) \;\cong\; S_{a_4} \times S_B.
\]
\end{lemma}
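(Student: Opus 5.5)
The plan is to define $\tau_\sigma$ directly on admissible triples, using the bijection $\Phi$ of Lemma~\ref{lem:lattice}, and then check order preservation with Lemma~\ref{lem:incl-char}.

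\textbf{Construction.} For each slot $(k,i)$ write $(k',i') := \sigma(k,i)$. Since $\sigma$ preserves $\mathcal{A}$ and $\mathcal{B}$, the chains $\mathcal{N}(S_k)$ and $\mathcal{N}(S_{k'})$ have equal length: $3$ on $\mathcal{A}$, and $2$ on $\mathcal{B}$. So there is a unique chain isomorphism $\psi_{(k,i),(k',i')}$ between them. It sends bottom to bottom and top to top, namely $1 \mapsto 1$ and $S_k \mapsto S_{k'}$. On $\mathcal{B}$ it also sends $A_k \mapsto A_{k'}$, even when $k \neq k'$ (for instance $k=3$, $k'=7$).

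Let $\sigma_*\colon \{\pm1\}^J \to \{\pm1\}^{\sigma(J)}$ denote coordinate relabelling, $(\sigma_* h)_{\sigma(k,i)} := h_{(k,i)}$. Define
\[
  \tau_\sigma\bigl(N(J,\mathbf{P},H)\bigr) := N\bigl(\sigma(J),\,\mathbf{P}',\,\sigma_* H\bigr),
  \qquad P'_{\sigma(k,i)} := \psi_{(k,i),\sigma(k,i)}(P_{k,i}) \quad ((k,i)\notin J).
\]
Two routine checks are needed.
\begin{enumerate}[(1)]
  \item The image triple is admissible. Conditions~\eqref{adm:i} and~\eqref{adm:ii} are statements about individual coordinates of $H$, so they are invariant under relabelling: $\sigma_*(-1_{(k,i)}) = (-1_{\sigma(k,i)})$.
  \item $\tau_{\sigma^{-1}}$ is a two-sided inverse of $\tau_\sigma$, because the $\psi$'s compose to identities and $\sigma_*$ is functorial. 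Hence $\tau_\sigma$ is a bijection of $\mathcal{N}(G)$.
\end{enumerate}

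\textbf{Order preservation.} Since $\tau_{\sigma^{-1}}$ is of the same form, it suffices to show that $N_1 \leq N_2$ implies $\tau_\sigma N_1 \leq \tau_\sigma N_2$. I will check conditions~\eqref{ic:a} and~\eqref{ic:b} of Lemma~\ref{lem:incl-char} for the images.

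The key observation is that $\psi$ sends top to top. Therefore the effective components transform by
\[
  \widetilde{P}_{\sigma(k,i)}(\tau_\sigma N) = \psi_{(k,i),\sigma(k,i)}\bigl(\widetilde{P}_{k,i}(N)\bigr),
\]
both when $(k,i) \in J$ (where the value is $S_k \mapsto S_{k'}$) and when $(k,i) \notin J$.

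\begin{enumerate}[(1)]
  \item Condition~\eqref{ic:a} is preserved because $\psi$ is an order isomorphism of chains.
  \item For condition~\eqref{ic:b}, note that $\sgn(P)$ depends only on whether $P$ is the top of the chain. So $\sgn(\widetilde{P}_{\sigma(k,i)}(\tau_\sigma N_1)) = \sgn(\widetilde{P}_{k,i}(N_1))$. The sets $J_2\setminus J_1$ and $J_1\cap J_2$ are carried by $\sigma$ to the corresponding sets for the images, and $\sigma_*$ commutes with forming the combined pattern $h^*$. Hence each instance of~\eqref{ic:b} for the images is exactly $\sigma_*$ applied to an instance for $N_1, N_2$.
\end{enumerate}

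This is the step I expect to need the most care, particularly in the cross-degree case $k \neq k'$. However, once the two invariances above are isolated (top goes to top, and $\sgn$ only detects the top), no case split on $k$ versus $k'$ is needed.

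\textbf{Conclusion.} The factor $S_k^{(k,i)}$ is the sub-product with $J=\emptyset$, $P_{k,i}=S_k$, and $P=1$ elsewhere. Its image under $\tau_\sigma$ is the sub-product with top at $\sigma(k,i)$ and bottoms elsewhere, namely $S_{k'}^{\sigma(k,i)}$. Thus $\pi_{\tau_\sigma} = \sigma$. Combined with the injectivity in Lemma~\ref{lem:factperm}, this gives $\LatAut(G)\cong S_{a_4}\times S_B$.
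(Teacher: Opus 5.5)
Your proposal is correct and follows essentially the same route as the paper: you define $\tau_\sigma$ on admissible triples via the unique chain isomorphisms $\psi_{(k,i),\sigma(k,i)}$ and coordinate relabelling of $H$, then verify order preservation through Lemma~\ref{lem:incl-char}. That verification rests on exactly the two invariances the paper isolates, namely the effective-component transformation (since $\psi$ sends top to top) and sign-content invariance (since $\sgn(P)$ only detects the top). The only cosmetic differences are that you obtain the reverse implication by applying the forward one to $\tau_{\sigma^{-1}}$, and that you check $\tau_{\sigma^{-1}}\circ\tau_\sigma=\id$ directly rather than proving the general composition law $\tau_\sigma\circ\tau_\rho=\tau_{\sigma\rho}$.
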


\begin{proof}
Since $\sigma$ preserves the partition $\{\mathcal{A}, \mathcal{B}\}$, for
every $(k,i)$ the chains $\mathcal{N}(S_k)$ and $\mathcal{N}(S_{k'})$
(where $\sigma(k,i) = (k',i')$)
have the same length: $3$ for $(k,i) \in \mathcal{A}$, and $2$ for
$(k,i) \in \mathcal{B}$. Let
\[
  \psi_{(k,i),\,(k',i')} \colon \mathcal{N}(S_k)
  \xrightarrow{\;\sim\;} \mathcal{N}(S_{k'})
\]
denote the unique order-preserving bijection between these chains,
explicitly, $1 \mapsto 1$, $A_k \mapsto A_{k'}$, and
(when $k = k' = 4$) $V_4 \mapsto V_4$, $S_4 \mapsto S_4$.
These maps compose:
$\psi_{(k',i'),(k'',i'')} \circ \psi_{(k,i),(k',i')} = \psi_{(k,i),(k'',i'')}$
by uniqueness of the chain isomorphism between equal-length finite chains.

\medskip
\noindent\textit{Construction.}
For each $N = N(J, \mathbf{P}, H) \in \mathcal{N}(G)$, define
\begin{equation}\label{eq:tau}
  \tau_\sigma(N) \;:=\; N\!\bigl(\sigma(J),\; \mathbf{P}^\sigma,\; H^\sigma\bigr),
\end{equation}
where, writing $\sigma(k,i) = (k',i')$, for $(k',i') \notin \sigma(J)$
(equivalently $(k,i) = \sigma^{-1}(k',i') \notin J$),
\[
  P^\sigma_{k',i'} \;:=\; \psi_{(k,i),\,(k',i')}\!\bigl(P_{k,i}\bigr)
  \;\in\; \mathcal{N}(S_{k'}),
\]
and
\[
  H^\sigma \;:=\; \bigl\{\,(h_{\sigma^{-1}(k',i')})_{(k',i') \in \sigma(J)}
  \;:\; h \in H\,\bigr\} \;\leq\; \{\pm 1\}^{\sigma(J)}.
\]
(For $(k',i') \in \sigma(J)$, $\sigma^{-1}(k',i') \in J$, so the coordinate
$h_{\sigma^{-1}(k',i')}$ of $h$ is defined.)

\medskip
\noindent\textit{Output is admissible.}
Conditions~\eqref{adm:i} and~\eqref{adm:ii} on $H^\sigma$ at coordinate
$(k',i') \in \sigma(J)$ correspond, via the relabelling $\sigma$, to the same
conditions on $H$ at coordinate $(k,i) = \sigma^{-1}(k',i') \in J$, which hold by
admissibility of $(J, \mathbf{P}, H)$. The relabelling map
$h \mapsto (h_{\sigma^{-1}(k',i')})_{(k',i')}$ is multiplicative, so $H^\sigma$
inherits the subgroup structure of $H$ in $\{\pm 1\}^{\sigma(J)}$.

\medskip
\noindent\textit{Sign-content invariance.}
For $P \in \mathcal{N}(S_k)$ with $\sigma(k,i) = (k',i')$,
\begin{equation}\label{eq:sgn-invariant}
  \sgn\bigl(\psi_{(k,i),\,(k',i')}(P)\bigr) \;=\; \sgn(P).
\end{equation}
Here, $\sgn(P) = \{+1\}$ iff $P$ is not the top of $\mathcal{N}(S_k)$;
since $\psi_{(k,i),(k',i')}$ preserves chain position, this is equivalent to
$\psi_{(k,i),(k',i')}(P)$ not being the top of $\mathcal{N}(S_{k'})$,
which is in turn equivalent to $\sgn(\psi_{(k,i),(k',i')}(P)) = \{+1\}$.

\medskip
\noindent\textit{Effective-component transformation.}
For every $(k',i')$, writing $(k,i) = \sigma^{-1}(k',i')$,
\begin{equation}\label{eq:Ptilde-trans}
  \widetilde{P}_{k',i'}(\tau_\sigma N) \;=\;
  \psi_{(k,i),\,(k',i')}\!\bigl(\widetilde{P}_{k,i}(N)\bigr).
\end{equation}
If $(k',i') \in \sigma(J)$, then $(k,i) \in J$ and both sides equal
$S_{k'}$ (the chain isomorphism sends top to top). If $(k',i') \notin \sigma(J)$,
then $(k,i) \notin J$ and both sides equal
$\psi_{(k,i),(k',i')}(P_{k,i}) = P^\sigma_{k',i'}$.

\medskip
\noindent\textit{Order preservation.}
We show $N_1 \leq N_2$ in $\mathcal{N}(G)$ if and only if
$\tau_\sigma(N_1) \leq \tau_\sigma(N_2)$, using Lemma~\ref{lem:incl-char}.

For condition~\eqref{ic:a}: by~\eqref{eq:Ptilde-trans},
$\widetilde{P}_{k',i'}(\tau_\sigma N_1) \leq \widetilde{P}_{k',i'}(\tau_\sigma N_2)$ in
$\mathcal{N}(S_{k'})$ if and only if
$\psi_{(k,i),(k',i')}(\widetilde{P}_{k,i}(N_1)) \leq
\psi_{(k,i),(k',i')}(\widetilde{P}_{k,i}(N_2))$, which (since
$\psi_{(k,i),(k',i')}$ is order-preserving) is equivalent to
$\widetilde{P}_{k,i}(N_1) \leq \widetilde{P}_{k,i}(N_2)$.
As $(k',i')$ ranges over all copy-slots, so does $(k,i) = \sigma^{-1}(k',i')$,
so the condition for the LHS at every $(k',i')$ is equivalent to the condition
for the RHS at every $(k,i)$.

For condition~\eqref{ic:b}: each $h_1 \in H_1$ corresponds bijectively to
$h_1^\sigma \in H_1^\sigma$ with $(h_1^\sigma)_{(k',i')} = h_{1,\sigma^{-1}(k',i')}$ for
$(k',i') \in \sigma(J_1)$, and similarly for $H_2$. The free coordinate set
$J_2 \setminus J_1$ corresponds under $\sigma$ to
$\sigma(J_2) \setminus \sigma(J_1)$. By~\eqref{eq:sgn-invariant} and
\eqref{eq:Ptilde-trans},
$\sgn(\widetilde{P}_{k,i}(N_1)) = \sgn(\widetilde{P}_{\sigma(k,i)}(\tau_\sigma N_1))$
for every $(k,i)$, so the allowed sign choices $\boldsymbol{\epsilon}$
correspond bijectively. The combined pattern $h^* \in \{\pm 1\}^{J_2}$
maps under relabelling to $h^{*\sigma} \in \{\pm 1\}^{\sigma(J_2)}$, with
$h^* \in H_2$ if and only if $h^{*\sigma} \in H_2^\sigma$.
Hence~\eqref{ic:b} on the LHS is equivalent to~\eqref{ic:b} on the RHS.

By Lemma~\ref{lem:incl-char}, $\tau_\sigma$ preserves $\leq$ in both
directions; combined with bijectivity (immediate from
$\tau_\sigma \circ \tau_{\sigma^{-1}} = \tau_{\id} = \id$, established
below), $\tau_\sigma \in \LatAut(G)$.

\medskip
\noindent\textit{Homomorphism property.}
We verify $\tau_\sigma \circ \tau_\rho = \tau_{\sigma\rho}$ for any
$\sigma, \rho \in \operatorname{Sym}(\mathcal{A}) \times \operatorname{Sym}(\mathcal{B})$.
For $(k'',i'') \notin \sigma\rho(J)$, write $(k',i') = \rho^{-1}(k'',i'')$
and $(k,i) = \sigma^{-1}(k',i') = (\sigma\rho)^{-1}(k'',i'')$. Then
\begin{align*}
  (P^\rho)^\sigma_{k'',i''}
  &= \psi_{(k',i'),\,(k'',i'')}\!\bigl((P^\rho)_{k',i'}\bigr) \\
  &= \psi_{(k',i'),\,(k'',i'')}\!\bigl(\psi_{(k,i),\,(k',i')}(P_{k,i})\bigr) \\
  &= \psi_{(k,i),\,(k'',i'')}(P_{k,i})
  \;=\; P^{\sigma\rho}_{k'',i''},
\end{align*}
using chain-iso composition $\psi_{(k',i'),(k'',i'')} \circ \psi_{(k,i),(k',i')}
= \psi_{(k,i),(k'',i'')}$.
For each $h \in H$, the corresponding element of $(H^\rho)^\sigma$ has
$(k'',i'')$-th coordinate $h_{\rho^{-1}\sigma^{-1}(k'',i'')} = h_{(\sigma\rho)^{-1}(k'',i'')}$,
which equals the $(k'',i'')$-th coordinate of the corresponding element of
$H^{\sigma\rho}$. Hence $\tau_\sigma \circ \tau_\rho = \tau_{\sigma\rho}$.
Taking $\rho = \sigma^{-1}$ gives $\tau_\sigma \circ \tau_{\sigma^{-1}} = \id$,
so each $\tau_\sigma$ is a bijection of $\mathcal{N}(G)$.

\medskip
\noindent\textit{Realisation of $\sigma$.}
The factor $S_k^{(k,i)}$ corresponds under Lemma~\ref{lem:lattice} to the
admissible triple $(\emptyset,\,(P_{k',i'})_{k',i'},\,\{()\})$ with $P_{k,i} = S_k$
and $P_{k',i'} = 1$ for $(k',i') \neq (k,i)$. Apply~\eqref{eq:tau} with
$J = \emptyset$ (so $\sigma(J) = \emptyset$ and $H^\sigma = \emptyset$):
write $\sigma(k,i) = (k'',i'')$. Then
\[
  P^\sigma_{k'',i''} = \psi_{(k,i),(k'',i'')}(S_k) = S_{k''}
  \quad\text{and}\quad
  P^\sigma_{k',i'} = \psi_{\sigma^{-1}(k',i'),(k',i')}(1) = 1
  \;\text{ for } (k',i') \neq (k'',i''),
\]
since chain isomorphisms send the bottom element $1$ to $1$.
Hence $\tau_\sigma(S_k^{(k,i)}) = S_{k''}^{(k'',i'')}$, so
$\pi_{\tau_\sigma} = \sigma$.

Combined with the injection $\LatAut(G) \hookrightarrow S_{a_4} \times S_B$
of Lemma~\ref{lem:factperm}, this gives $\LatAut(G) \cong S_{a_4} \times S_B$.
\end{proof}

\begin{remark}\label{rem:realise-vs-group}
For a transposition $\sigma$ swapping $(k,i) \leftrightarrow (k,i')$ with
the same type $k$, the lattice automorphism $\tau_\sigma$ is induced by
the group automorphism of $G$ that exchanges the $i$-th and $i'$-th copies
of $S_k$ via any group isomorphism $S_k \to S_k$ (the choice here is
immaterial since all such isomorphisms induce the same chain map
$\psi_{(k,i),(k,i')}$, which is the identity on $\mathcal{N}(S_k)$).
For transpositions swapping $(k,i) \leftrightarrow (k',i')$ with $k \neq k'$
(only possible within class $\mathcal{B}$), no group isomorphism
$S_k \to S_{k'}$ exists, and $\tau_\sigma$ has no group-automorphism
realisation; the construction is genuinely lattice-theoretic, relying on
Lemma~\ref{lem:incl-char} and the non-trivial chain isomorphism
$\psi_{(k,i),(k',i')}$ (e.g., $A_k \mapsto A_{k'}$).
\end{remark}

\section{The Product Formula and Termination}

\label{sec:termination}

Lemmas~\ref{lem:factperm} and~\ref{lem:realise} together prove
Theorem~\ref{thm:product}. We now derive Theorem~\ref{thm:termination}.

To compute $\LatAut$ of the groups that appear in the tower, we need two
additional computations.

\begin{lemma}\label{lem:single}
For $n \geq 3$, $\LatAut(S_n) = 1$.
\end{lemma}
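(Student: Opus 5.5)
The plan is to argue directly from the structure of $\mathcal{N}(S_n)$ recorded in Section~2, rather than through the Product Formula. The Product Formula would in fact give the same answer: for $n=4$ it yields $S_1\times S_0$, and for $n\ne 4$ it yields $S_0\times S_1$, both trivial. Still, the direct argument is more transparent and does not depend on the heavier machinery.

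The first step is to recall the normal subgroup lattices. By the discussion following Theorem~\ref{thm:alternating}, $\mathcal{N}(S_n)$ is a finite chain. For $n=3$ and for $n\ge 5$ it is $\{1,A_n,S_n\}$. For $n=4$ it is $\{1,V_4,A_4,S_4\}$. In all cases it is a totally ordered set, since any two normal subgroups in the list are comparable.

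The second step is to show that a finite chain has only the identity automorphism. Let $\varphi$ be an order automorphism of a finite chain $c_0<c_1<\cdots<c_m$. It must send the least element to the least element. It then maps the chain $\{c_1,\dots,c_m\}$ above $c_0$ onto itself, so induction on $m$ gives $\varphi(c_j)=c_j$ for every $j$. Equivalently, $\varphi$ preserves the height $j$ of each element, which is the length of the maximal chain from $c_0$ up to $c_j$, and in a chain each height is attained by exactly one element. Therefore $\varphi=\id$ and $\LatAut(S_n)=\Aut(\mathcal{N}(S_n))=1$.

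The only real content is correctly identifying $\mathcal{N}(S_n)$, and that is already done in the paper. For $n=4$ one might worry whether $V_4$ is the unique normal subgroup of order $4$, but Section~2 records this. As a cross-check, I would note that this is the case $a_n=1$ of Theorem~\ref{thm:product} and is consistent with it. I expect no genuine obstacle.
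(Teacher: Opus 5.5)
Your proof is correct and follows the paper's argument: you identify $\mathcal{N}(S_n)$ as a finite chain (of length $2$ for $n \neq 4$ and length $3$ for $n = 4$) and then observe that a finite chain has only the identity automorphism. Your induction on the chain length, or equivalently your height-preservation remark, justifies that last step slightly more explicitly than the paper does.
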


\begin{proof}
$\mathcal{N}(S_n)$ is a finite chain (of length $2$ for $n \neq 4$ and length $3$
for $n = 4$). Any automorphism of a chain must fix every element (since it fixes
the unique minimum and maximum and the chain is totally ordered). Hence the only
lattice automorphism of $\mathcal{N}(S_n)$ is the identity.
\end{proof}

\begin{lemma}\label{lem:c2sq}
$\LatAut(C_2^2) \cong \GL(2, 2) \cong S_3$.
\end{lemma}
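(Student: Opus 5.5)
Since $C_2^2$ is abelian, every subgroup is normal, so $\mathcal{N}(C_2^2)$ is the full subgroup lattice of $C_2^2 \cong \mathbb{F}_2^2$. The plan is to describe this lattice explicitly and read off its automorphism group. Its elements are the trivial subgroup $0$, the whole group $C_2^2$, and the three subgroups of order $2$, namely $\langle(1,0)\rangle$, $\langle(0,1)\rangle$ and $\langle(1,1)\rangle$. There are no other subgroups: a subgroup of order $2$ is generated by its unique non-identity element, and $C_2^2$ has exactly three such elements. These three order-$2$ subgroups are pairwise incomparable. Any two of them meet in $0$ and join to $C_2^2$. So $\mathcal{N}(C_2^2)$ is the lattice $M_3$, of height $2$, with three atoms that are also the coatoms.

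Next we compute $\Aut(M_3)$. Any lattice automorphism fixes the minimum $0$ and the maximum $C_2^2$, so it restricts to a bijection of the three-element set of atoms. Conversely, every permutation of the atoms, extended by fixing $0$ and the top, is an order automorphism. Indeed, the atoms form an antichain and each lies strictly between $0$ and the top, so the relation $\leq$ is preserved in both directions. Restriction to the atoms is injective, because an automorphism is determined by its action on all elements and the two extreme elements are forced. Hence $\LatAut(C_2^2) \cong \operatorname{Sym}(\text{atoms}) \cong S_3$.

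Finally, we record the identification with $\GL(2,2)$. This group acts on $\mathbb{F}_2^2$ and hence on its three $1$-dimensional subspaces. The action is faithful: a matrix fixing all three lines fixes each nonzero vector, since each line contains exactly one nonzero vector, so the matrix is the identity. The order count $|\GL(2,2)| = (4-1)(4-2) = 6 = |S_3|$ then shows the induced map $\GL(2,2) \to S_3$ is an isomorphism. Consequently every lattice automorphism is induced by a group automorphism of $C_2^2$.

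No step here is a real obstacle. The only point needing care is that the lattice under consideration is the full subgroup lattice, which holds because $C_2^2$ is abelian. With that in place, the count of exactly three proper nontrivial subgroups is immediate.
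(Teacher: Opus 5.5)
Your proof is correct and follows essentially the same route as the paper: enumerate the five-element lattice, restrict automorphisms to the three atoms, and use the faithful action of $\GL(2,2)$ on the lines of $\mathbb{F}_2^2$ together with $|\GL(2,2)| = 6$. The only difference is how you get surjectivity onto $S_3$. You check it directly: any permutation of the antichain of atoms that fixes $0$ and the top is an order automorphism. The paper instead obtains it from the chain $\GL(2,2) \hookrightarrow \LatAut(C_2^2) \hookrightarrow S_3$ and the order count, which gives both isomorphisms at once.
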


\begin{proof}
The normal subgroups of $C_2^2$ are $\{1\}$, three subgroups of order $2$
(one for each nonzero element), and $C_2^2$ itself. A lattice automorphism
must fix $\{1\}$ and $C_2^2$ (the unique minimum and maximum) and is
therefore determined by its restriction to the three atoms. This gives an
injective homomorphism $\LatAut(C_2^2) \hookrightarrow S_3$.

For the reverse inclusion, identify $C_2^2$ with $\mathbb{F}_2^2$ as an
additive group. Each nonzero vector spans a unique $1$-dimensional subspace
(since $\mathbb{F}_2^* = \{1\}$), so the three atoms of $\mathcal{N}(C_2^2)$
correspond bijectively to the three nonzero elements of $\mathbb{F}_2^2$.
Every $A \in \GL(2, \mathbb{F}_2)$ permutes these nonzero elements and maps
subgroups to subgroups, hence induces a lattice automorphism of
$\mathcal{N}(C_2^2)$. The induced map $\GL(2, \mathbb{F}_2) \to \LatAut(C_2^2)$
is injective: a linear map fixing all nonzero vectors of $\mathbb{F}_2^2$
is the identity. Since $|\GL(2, \mathbb{F}_2)| = (4-1)(4-2) = 6 = |S_3|$,
the embedding $\LatAut(C_2^2) \hookrightarrow S_3$ is an isomorphism, and
$\GL(2, \mathbb{F}_2) \cong S_3$.
\end{proof}

\begin{lemma}\label{lem:c2sm}
For $m \geq 3$, $\LatAut(C_2 \times S_m) \cong C_2$.
\end{lemma}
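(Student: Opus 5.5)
We determine $\mathcal{N}(C_2 \times S_m)$ explicitly by Goursat's lemma (Theorem~\ref{thm:goursat}), and then read off its automorphism group.

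\emph{Step 1: the normal subgroups.} Write $C_2 = \{1, z\}$. For a normal subgroup $N$, set $H_0 = \pi_1(N)$ and $H_1 = N \cap C_2$ on the $C_2$ side, and $K_0 = \pi_2(N)$ and $K_1 = N \cap S_m$ on the $S_m$ side. The quotients satisfy $H_0/H_1 \cong K_0/K_1$. Since $H_0/H_1$ has order at most $2$, it is either trivial or $C_2$.
\begin{enumerate}[(i)]
\item If the quotient is trivial, $N = H_0 \times K_0$. This gives $2 \cdot |\mathcal{N}(S_m)|$ sub-products.
\item If the quotient is $C_2$, then $H_0 = C_2$ and $H_1 = 1$. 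On the $S_m$ side we need a pair $(K_0, K_1)$ with $K_0/K_1 \cong C_2$ and $[S_m, K_0] \leq K_1$. The pair analysis already carried out in the proof of Lemma~\ref{lem:lattice} forces $(K_0, K_1) = (S_m, A_m)$, for $m = 4$ as well as $m \neq 4$. This yields exactly one extra element, the diagonal $D = \{(c, g) : c = \sgn g\}$.
\end{enumerate}

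\emph{Step 2: the case $m \neq 4$.} Here $\mathcal{N}(S_m) = \{1, A_m, S_m\}$, so $\mathcal{N}(G)$ has $7$ elements. These are $1$, $A_m$, $C_2$, $S_m$, $C_2 \times A_m$, $D$, and $G$. Their order relations are as follows.
\begin{itemize}
\item The atoms are $C_2$ and $A_m$, so there are exactly two.
\item The coatoms are $C_2 \times A_m$, $S_m$, and $D$; each has index $2$.
\item $A_m$ lies below all three coatoms, since $A_m \leq D$.
\item $C_2$ lies only below $C_2 \times A_m$.
\end{itemize}
Any automorphism must fix $C_2$: among the atoms, $C_2$ lies under one coatom and $A_m$ under three, so they have different numbers of covers. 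Consequently the automorphism fixes $A_m$ and $C_2 \times A_m = C_2 \vee A_m$. It can therefore at most swap $S_m$ and $D$. This swap is a genuine automorphism, because both elements cover exactly $A_m$ and are covered by $G$, and this is their only order data. Hence $\LatAut(G) \cong C_2$.

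\emph{Step 3: the case $m = 4$.} Now $\mathcal{N}(G)$ has $9$ elements, the two extra ones being $V_4$ and $C_2 \times V_4$. The chain $1 < V_4 < A_4$ sits below $D$ and $S_4$ in the same way, so the swap of $S_4$ and $D$ fixing everything else is again an automorphism. For rigidity, first note that $C_2$ and $V_4$ are the only atoms. They are distinguished by the height of the elements covering them:
\begin{itemize}
\item $C_2$ is covered only by $C_2 \times V_4$;
\item $V_4$ is covered by $C_2 \times V_4$ and by $A_4$.
\end{itemize}
So both atoms are fixed. It follows that $C_2 \times V_4$, $A_4$, and $C_2 \times A_4$ are fixed, and again only $\{S_4, D\}$ can move.

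The main obstacle is Step 1, specifically making sure no further diagonal normal subgroups arise, for example from $V_4$ quotients when $m = 4$. This is handled by reusing the exhaustive pair table from Lemma~\ref{lem:lattice}, which admits only $(S_m, A_m)$ as a nontrivial valid pair.
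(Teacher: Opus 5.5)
Your proof is correct and follows essentially the same route as the paper: you list $\mathcal{N}(C_2\times S_m)$ as the sub-products plus the single diagonal $D$ (which you actually derive from Goursat, while the paper just states it), fix the atom $C_2$ by an incidence count (you count covers, the paper counts coatoms above each atom), and observe that the only remaining freedom is the swap $S_m \leftrightarrow D$, which is an automorphism. One small wording fix: for $m=4$ your bullets separate the two atoms by the \emph{number} of covers ($1$ versus $2$), not by the height of the covering elements, since $C_2\times V_4$ and $A_4$ both have height $2$.
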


\begin{proof}
Let  $G = C_2 \times S_m$ with $C_2 = \{1, c\}$. The elements of $\mathcal{N}(G)$
are:
\begin{itemize}
\item Sub-products: $1$, $\{1\} \times A_m$, $C_2 \times \{1\}$, $\{1\} \times S_m$,
  $C_2 \times A_m$, and $G$. (For $m = 4$, also $\{1\} \times V_4$ and $C_2 \times V_4$,
  but the argument below applies uniformly.)
\item One sign-parity element $D = \{(e, g) : \sgn(g) = 1\} \cup \{(c, g) : \sgn(g) = -1\}$,
  corresponding to the constraint $\chi_1 \cdot \chi_2 = 1$ where $\chi_1$ is the
  non-trivial character of $C_2$ and $\chi_2 = \sgn$.
\end{itemize}

We use the lattice structure to fix $C_2 \times \{1\}$. Counting the 
coatoms,  each atom of $\mathcal{N}(G)$ lies below:
\begin{itemize}
  \item $C_2 \times \{1\}$ lies below \emph{exactly one} coatom
    (namely $C_2 \times A_m$):
    $(c,e) \notin \{1\} \times S_m$ since first coordinate $c \neq 1$;
    $(c,e) \notin D$ since $\chi_1(c)\sgn(e) = (-1)(+1) = -1 \neq 1$;
    but $(c,e) \in C_2 \times A_m$ since $c \in C_2$ and $e \in A_m$.
  \item $\{1\} \times A_m$ (or $\{1\} \times V_4$ when $m = 4$) lies below
    \emph{all three} coatoms: it is contained in $\{1\} \times S_m$
    and in $C_2 \times A_m$ as sub-products; and every $(1, a)$ with
    $a \in A_m$ satisfies $\chi_1(1)\sgn(a) = 1$, so
    $\{1\} \times A_m \leq D$.
\end{itemize}
Since lattice automorphisms permute coatoms and preserve incidence,
the count of coatoms above each atom is a lattice invariant.
As $1 \neq 3$, no $\varphi \in \LatAut(G)$ can swap the two atoms,
so $\varphi(C_2 \times \{1\}) = C_2 \times \{1\}$.

Once $C_2 \times \{1\}$ is fixed, $\varphi$ must fix the unique element of
$\mathcal{N}(G)$ covering it. For $m \neq 4$, the minimal normal subgroup
of $S_m$ is $A_m$ (simple for $m \geq 5$; equal to $C_3$ for $m = 3$),
so $C_2 \times A_m$ covers $C_2 \times \{1\}$ with no intermediate normal
subgroup, and is itself a coatom. For $m = 4$, the minimal normal subgroup
of $S_4$ is $V_4$, and no normal subgroup of $C_2 \times S_4$ lies strictly
between $C_2 \times \{1\}$ and $C_2 \times V_4$, so $C_2 \times V_4$ covers
$C_2 \times \{1\}$ and is fixed; $\varphi$ then also fixes $C_2 \times A_4$,
the unique element of $\mathcal{N}(G)$ covering $C_2 \times V_4$, which is a
coatom since $[G : C_2 \times A_4] = 2$.
In both cases the coatom $C_2 \times A_m$ is fixed by $\varphi$.
The remaining two coatoms are $\{1\} \times S_m$ and $D$. We show they are
lattice-equivalent by comparing their intervals:
\[
  [1, \{1\} \times S_m] \;\cong\; \mathcal{N}(S_m), \qquad
  [1, D] \;\cong\; \mathcal{N}(S_m),
\]
where for $m \neq 4$ each interval is the chain
$\{1\} \subset \{1\} \times A_m \subset \{1\} \times S_m$
(resp.\ $\{1\} \subset \{1\} \times A_m \subset D$) of length~$2$,
and for $m = 4$ each interval is the chain
$\{1\} \subset \{1\} \times V_4 \subset \{1\} \times A_4 \subset \{1\} \times S_4$
(resp.\ $\{1\} \subset \{1\} \times V_4 \subset \{1\} \times A_4 \subset D$)
of length~$3$. To verify: the elements of $\mathcal{N}(G)$ below $D$ are
$\{1\}$, $\{1\} \times A_m$ (and $\{1\} \times V_4$ when $m=4$), and $D$ itself,
forming a chain isomorphic to $[1, \{1\} \times S_m]$.
(Note: $C_2 \times \{1\} \not\leq D$ as shown above, and for $m=4$, $\{1\} \times V_4 \leq D$
since every element of $V_4$ is even.)

Since $[1, D]$ and $[1, \{1\} \times S_m]$ are isomorphic chains and both
coatoms have the same upper interval $[\cdot, G] \cong \{pt < G\}$,
define $\tau \colon \mathcal{N}(G) \to \mathcal{N}(G)$ by: $\tau$ fixes
$\{1\}$, $C_2 \times \{1\}$, $C_2 \times A_m$, and $G$; swaps the two
coatoms $\{1\} \times S_m \leftrightarrow D$; and, using the interval
isomorphism $[1, \{1\} \times S_m] \cong [1, D]$, swaps each element
below $\{1\} \times S_m$ with its corresponding element below $D$.
In both cases $\{1\} \times A_m$ is fixed: it is the unique minimal element
of both intervals above $\{1\}$ (it lies in $D$ since every $a \in A_m$
satisfies $\sgn(a) = 1$), and for $m = 4$, $\{1\} \times V_4$ is likewise
fixed as the unique second element of both chains.
Every inclusion relation is preserved: inclusions within each interval are
preserved by the interval isomorphism; inclusions involving the fixed elements
$C_2 \times \{1\}$ and $C_2 \times A_m$ are unaffected; and no element of
$[1, \{1\} \times S_m]$ above $\{1\} \times A_m$ is comparable with any
element of $[1, D]$ above $\{1\} \times A_m$ (since $\{1\} \times S_m \not\leq D$
and $D \not\leq \{1\} \times S_m$, as each contains odd-parity elements
the other does not).
Hence $\tau \in \LatAut(G)$, $\tau \neq \id$, and $\tau^2 = \id$.
Since every lattice automorphism fixes $C_2 \times \{1\}$ and the chain
above it, the only remaining freedom is whether to swap $\{1\} \times S_m$
and $D$. Hence $\LatAut(G) = \{\id, \tau\} \cong C_2$.
(For $m = 3$, the interval structure is identical to the case $m \geq 5$.
They both have length-$2$ chains $[1, \{1\} \times S_m]$ and $[1, D]$,
so the same argument applies without modification.)
\end{proof}

\begin{lemma}\label{lem:c2trivial}
$\LatAut(C_2) = 1$.
\end{lemma}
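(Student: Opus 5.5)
The plan is to compute the normal subgroup lattice of $C_2$ directly and observe that it is a two-element chain. Since $C_2$ has prime order, Lagrange's theorem shows that its only subgroups are $\{1\}$ and $C_2$ itself, and both are normal. Hence $\mathcal{N}(C_2) = \{\{1\}, C_2\}$, ordered by $\{1\} < C_2$, which is a chain of length~$1$.

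Next I would argue exactly as in Lemma~\ref{lem:single}. Any lattice automorphism $\varphi$ of a finite bounded lattice fixes the unique minimum and the unique maximum, because it preserves $\leq$ in both directions. Here these two elements, $\{1\}$ and $C_2$, are the whole lattice, so $\varphi = \id$ and $\LatAut(C_2) = 1$.

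There is no genuine obstacle. The only point to check is that the lattice has no elements besides its bounds, and the prime-order argument settles that immediately.
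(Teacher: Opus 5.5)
Your proof is correct and follows the same argument as the paper: $\mathcal{N}(C_2)=\{1,C_2\}$ is a two-element chain, and any lattice automorphism fixes its minimum and maximum, so it is the identity. Your Lagrange justification that no other normal subgroups exist simply makes explicit a step the paper leaves implicit.
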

\begin{proof}
$\mathcal{N}(C_2) = \{1, C_2\}$ is a chain of length $1$; any lattice automorphism fixes both elements.
\end{proof}

We can now prove Theorem~\ref{thm:termination}.

\begin{proof}[Proof of Theorem~\ref{thm:termination}]
Let $G_0 = \prod_{k \geq 3} S_k^{a_k}$ be a tower group with parameters
$a_4$ and $B = \sum_{k \neq 4, k \geq 3} a_k$. By Theorem~\ref{thm:product},
\[
  G_1 = \Aut\!\bigl(\mathcal{N}(G_0)\bigr) \;\cong\; S_{a_4} \times S_B.
\]

We carry out a complete case analysis of $G_2 = \Aut(\mathcal{N}(G_1))$ and verify that $G_3 = \Aut(\mathcal{N}(G_2)) = 1$ in each case. Throughout, we adopt the conventions $S_0 = S_1 = 1$.

\medskip
\noindent\textit{Case 1: $G_1 = 1$} (i.e., $a_4 \leq 1$ and $B \leq 1$).
Then $G_2 = 1$ and $G_3 = 1$.

\medskip
\noindent\textit{Case 2: $G_1 = C_2$} (i.e., $(a_4 = 2$ and $B \leq 1)$
or $(a_4 \leq 1$ and $B = 2)$, so that $G_1 \cong S_2 \times S_0 = C_2$
or $G_1 \cong S_0 \times S_2 = C_2$ respectively).
By Lemma~\ref{lem:c2trivial}, $G_2 = 1$ and $G_3 = 1$.

\medskip
\noindent\textit{Case 3: $G_1 = S_n$ for some $n \geq 3$} (i.e.,
$(a_4 \geq 3$ and $B \leq 1)$ or $(a_4 \leq 1$ and $B \geq 3)$,
so that $G_1 \cong S_{a_4} \times S_0 = S_{a_4}$ or
$G_1 \cong S_0 \times S_B = S_B$ respectively).
By Lemma~\ref{lem:single}, $G_2 = 1$ and $G_3 = 1$.

\medskip
\noindent\textit{Case 4: $G_1 = C_2 \times C_2$} (i.e., $a_4 = 2$ and $B = 2$).
By Lemma~\ref{lem:c2sq}, $G_2 = \GL(2, 2) \cong S_3$.
By Lemma~\ref{lem:single}, $G_3 = \Aut(\mathcal{N}(S_3)) = 1$.

\medskip
\noindent\textit{Case 5: $G_1 = C_2 \times S_m$ for some $m \geq 3$} (i.e., one of $a_4, B$ equals $2$ and the other is $\geq 3$).
By Lemma~\ref{lem:c2sm}, $G_2 \cong C_2$.
By Lemma~\ref{lem:c2trivial}, $G_3 = 1$.

\medskip
\noindent\textit{Case 6: $G_1 = S_n \times S_m$ with $n, m \geq 3$} (i.e., $a_4 \geq 3$ and $B \geq 3$).
Here $G_1$ is itself a tower group (with one factor of degree $n$ and one of degree $m$).
Applying Theorem~\ref{thm:product} to $G_1$. Let $a_4' = \mathbf{1}[n = 4] + \mathbf{1}[m = 4]$
and $B' = \mathbf{1}[n \geq 3,\, n \neq 4] + \mathbf{1}[m \geq 3,\, m \neq 4]$.
Then $G_2 = S_{a_4'} \times S_{B'}$. Since $G_1$ has exactly two factors,
$a_4' + B' \leq 2$, so $G_2$ falls into one of Cases 1--5 (with $G_1$ replaced by $G_2$
and so on). Specifically:
\begin{itemize}
  \item If $n = m = 4$: $a_4' = 2$, $B' = 0$, so $G_2 = S_2 = C_2$ (Case 2), $G_3 = 1$.
  \item If exactly one of $n, m$ equals $4$: $a_4' = 1$, $B' = 1$, so $G_2 = S_1 \times S_1 = 1$ (Case 1), $G_3 = 1$.
  \item If neither $n$ nor $m$ equals $4$ (both $\geq 3$): $a_4' = 0$, $B' = 2$, so $G_2 = S_2 = C_2$ (Case 2), $G_3 = 1$.
\end{itemize}
In all sub-cases, $G_3 = 1$.

\medskip
All cases are exhausted. Hence $G_3 = 1$ for every tower group $G_0$.

\medskip
\noindent\textit{Sharpness.} Take $G_0 = S_4^2 \times S_3^2$. Then $a_4 = 2$ and
$B = 2$, giving $G_1 = S_2 \times S_2 = C_2^2$ (Case 4). Then $G_2 = S_3 \neq 1$
and $G_3 = 1$. Since $G_2 = S_3 \neq 1$, the tower requires all three steps:
it does not terminate at step $1$ or $2$.
Hence the bound $G_3 = 1$ is sharp.
\end{proof}

\section{Examples}
\label{sec:examples}

We work through four explicit examples illustrating the Product Formula and the tower.

\begin{example}[$G_0 = S_3^3$]\label{ex:s33}
Here $a_4 = 0$ (no $S_4$ factors) and $B = 3$ (three non-$S_4$ factors, namely
the three copies of $S_3$). The Product Formula gives
\[
  G_1 = \Aut\!\bigl(\mathcal{N}(S_3^3)\bigr) \;\cong\; S_0 \times S_3 = S_3.
\]
The lattice $\mathcal{N}(S_3)$ is the chain
\[
  \{1, A_3, S_3\},
\]
so it has length $2$.
By Lemma~\ref{lem:single}, $G_2 = \Aut(\mathcal{N}(S_3)) = 1$.
The tower terminates in $2$ steps: $S_3^3 \to S_3 \to 1$.
\end{example}

\begin{example}[$G_0 = S_4^3$]\label{ex:s43}
Here $a_4 = 3$ and $B = 0$. The Product Formula gives
\[
  G_1 = \Aut\!\bigl(\mathcal{N}(S_4^3)\bigr) \;\cong\; S_3 \times S_0 = S_3.
\]
By Lemma~\ref{lem:single}, $G_2 = \Aut(\mathcal{N}(S_3)) = 1$. The tower terminates in $2$ steps.
\end{example}

\begin{example}[$G_0 = S_5^2 \times S_3^2$]\label{ex:s52s32}
Here $a_4 = 0$ and $B = 4$ (two copies of $S_5$ and two of $S_3$). The Product Formula gives
\[
  G_1 = \Aut\!\bigl(\mathcal{N}(S_5^2 \times S_3^2)\bigr) \;\cong\; S_0 \times S_4 = S_4.
\]
By Lemma~\ref{lem:single}, $G_2 = \Aut(\mathcal{N}(S_4)) = 1$. The tower terminates in $2$ steps.

\noindent\textit{Verification:} In $\mathcal{N}(S_5^2 \times S_3^2)$, the class $\mathcal{A}$
is empty and $\mathcal{B}$ has four factors ($B = 4$). Every automorphism of this
abstract lattice permutes the four $\mathcal{B}$-factors freely, giving
$\Aut(\mathcal{N}(G_0)) \cong S_4$.
\end{example}

\begin{example}[Sharpness: $G_0 = S_4^2 \times S_3^2$]\label{ex:sharp}
Here $a_4 = 2$ and $B = 2$. We trace the full tower explicitly.

The class $\mathcal{A}$ has $a_4 = 2$ factors (the two copies of $S_4$), and
$\mathcal{B}$ has $B = 2$ factors (the two copies of $S_3$). Every automorphism
of the abstract lattice $\mathcal{N}(G_0)$ permutes the $\mathcal{A}$-factors
among themselves (contributing a factor of $S_2$) and the $\mathcal{B}$-factors
among themselves (another factor of $S_2$), giving
\[
  G_1 = \Aut\!\bigl(\mathcal{N}(S_4^2 \times S_3^2)\bigr) \;\cong\; S_2 \times S_2 = C_2^2.
\]

The lattice $\mathcal{N}(C_2^2)$ has five elements: the identity, three subgroups
of order $2$ (one for each non-identity element of $C_2^2$), and $C_2^2$ itself.
Any automorphism of this abstract lattice permutes the three atoms freely with no
further constraints (see Figure~\ref{fig:nc2sq}), giving
\[
  G_2 = \Aut\!\bigl(\mathcal{N}(C_2^2)\bigr) \;\cong\; S_3.
\]

\begin{figure}[ht]
\centering
\begin{tikzpicture}[scale=1.1,
  node/.style={circle, fill=black, inner sep=1.5pt},
  label/.style={font=\small}]
\node[node] (bot) at (0,0) {};
\node[node] (a)   at (-1.5,1.5) {};
\node[node] (b)   at (0,1.5) {};
\node[node] (ab)  at (1.5,1.5) {};
\node[node] (top) at (0,3) {};
\draw (bot) -- (a) -- (top);
\draw (bot) -- (b) -- (top);
\draw (bot) -- (ab) -- (top);
\node[label, below=3pt] at (bot) {$\{1\}$};
\node[label, left=3pt]  at (a)   {$\langle a \rangle$};
\node[label, above=3pt] at (b)   {$\langle b \rangle$};
\node[label, right=3pt] at (ab)  {$\langle ab \rangle$};
\node[label, above=3pt] at (top) {$C_2^2$};
\draw[<->, thick, bend right=25]
  (-1.5,1.2) to node[below, font=\scriptsize]{$S_3$} (1.5,1.2);
\end{tikzpicture}
\caption{The lattice $\mathcal{N}(C_2^2)$: five elements arranged as a
diamond, with three atoms (middle level) permuted by
$\LatAut(C_2^2) \cong S_3$.}
\label{fig:nc2sq}
\end{figure}

Since $\mathcal{N}(S_3) = \{1, A_3, S_3\}$ is a chain of length $2$, its only
automorphism is the identity, giving
\[
  G_3 = \Aut\!\bigl(\mathcal{N}(S_3)\bigr) = 1.
\]

The full tower is therefore
\[
  S_4^2 \times S_3^2
  \;\xrightarrow{\;\Aut(\mathcal{N}(\cdot))\;}
  C_2^2
  \;\xrightarrow{\;\Aut(\mathcal{N}(\cdot))\;}
  S_3
  \;\xrightarrow{\;\Aut(\mathcal{N}(\cdot))\;}
  1,
\]
terminating at step $3$ with $G_2 = S_3 \neq 1$, confirming that the bound
$G_3 = 1$ is sharp.
\end{example}

\section{Conclusion}

We have proved a complete, sharp termination result for the LatAut tower on
tower groups. The Product Formula (Theorem~\ref{thm:product}) compresses the
full lattice-automorphism group $\LatAut(G)$ of any tower group $G$ into a
product of two symmetric groups, determined by counting $S_4$-factors and
non-$S_4$-factors separately. The Termination Theorem (Theorem~\ref{thm:termination})
shows that iterating this compression yields the trivial group within three steps
of the original tower group, and that three steps are genuinely required.

\medskip
\noindent\textit{Scope:} The results apply only to tower groups
$G = \prod_{k \geq 3} S_k^{a_k}$. For general finite groups, neither the
Product Formula nor the step-count bound is established here. The
classification of $\mathcal{N}(G)$ via Goursat's lemma depends heavily on the
normal subgroup structure of the factors $S_k$ (simplicity of $A_k$ for $k \geq 5$,
and the specific structure for $k = 3, 4$), and the argument does not extend
without modification to other group families.

\medskip
\noindent\textit{Obstructions to extension:} Three specific steps in the
argument are tailored to the tower-group setting and fail for more general
direct products. First, the classification of $\mathcal{N}(G)$ via admissible
triples relies on the fact that the only valid Goursat coupling quotient for
pairs of factors in a tower group is $C_2$, the normal subgroup structure of
$S_k$ forces $N_{k,i}/M_{k,i} \in \{1, C_2\}$ for every valid pair, so the sign
subgroup $H \leq \{\pm 1\}^J$ encodes all coupling data. For a direct product
$G = H \times K$ in which $H$ and $K$ share a common quotient $C_p$ for an
odd prime $p$, the Goursat coupling produces elements of $\mathcal{N}(G)$
parameterised by subgroups of $C_p^J$ rather than $\{\pm 1\}^J$, and the
admissibility conditions and product-one characterisation of sign-parity
elements have no direct analogue. Second, the complement-uniqueness argument
in Lemma~\ref{lem:factperm} uses $Z(G) = 1$ to conclude that every element
of $\mathcal{N}(G)$ with a lattice-theoretic complement is a sub-product of
the indecomposable direct factors. For groups with nontrivial centre or with
factors sharing common normal quotients, the correspondence between
lattice-theoretic complements and group-theoretic direct factors breaks down.
Third, the partition of direct factors into classes $\mathcal{A}$ and
$\mathcal{B}$ by the chain length of $\mathcal{N}(S_k)$ at each copy-slot
has no analogue for indecomposable groups whose normal subgroup lattices are
not chains, and the injection $\LatAut(G) \hookrightarrow S_{a_4} \times S_B$
depends essentially on this two-class structure.

\section{Future Directions}

The following problems arise from the methods of this paper.

\begin{enumerate}
\item[\textup{(1)}]
Let $H$ and $K$ be finite groups sharing a common quotient $C_p$ for an
odd prime $p$, so that Goursat coupling across $H \times K$ is parameterised
by subgroups of $C_p^J$ rather than $\{\pm1\}^J$.
Determine $\LatAut(H \times K)$ in this setting and characterise
$\mathcal{N}(H \times K)$ combinatorially.

\item[\textup{(2)}]
The factor-permutation argument of Lemma~\ref{lem:factperm} identifies
direct factors via complement-uniqueness within a single sign-parity cluster
of $\mathcal{N}(G)$. For groups whose normal-subgroup lattice contains
multiple such clusters at distinct levels, determine whether $\LatAut(G)$
admits an analogous description and whether the LatAut tower terminates
in bounded steps.

\item[\textup{(3)}]
The bound $G_3 = 1$ uses the partition of direct factors into classes
$\mathcal{A}$ and $\mathcal{B}$ defined by the chain length of the local
interval $[1, S_k^{(k,i)}]$. For direct products of non-isomorphic
non-abelian simple groups other than symmetric groups, the analogous
partition may have more than two classes. Determine whether the LatAut
tower terminates in such cases and, if so, find a sharp step bound.

\end{enumerate}


\end{document}